\renewcommand{\int}{\smallint\!}
\newcommand{\op}{\mathrm{op}}
\DeclareMathOperator*{\colim}{colim}
\newcommand{\sm}{\mathrm{sm}}
\newcommand{\fp}{\mathrm{fp}}
\newcommand{\smt}{\mathrm{sm}}
\newcommand{\inv}{^{-1}}
\DeclareMathOperator{\id}{id}
\newcommand{\cdh}{\mathrm{cdh}}
\newcommand{\red}{\mathrm{red}}
\newcommand{\Mod}{\mathscr{M}\mathrm{od}}
\newcommand{\CCC}{\mathscr{C}}
\newcommand{\SSS}{\mathscr{S}}
\newcommand{\MMM}{\mathscr{M}}
\newcommand{\PPP}{\mathscr{P}}
\newcommand{\VVV}{\mathscr{V}}
\newcommand{\Sp}{{\mathscr{S}\mathrm{p}}}
\renewcommand{\Pr}{\mathscr{P}\mathrm{r}}
\newcommand{\PrLo}{{\Pr}^{\mathrm{L},\otimes}}
\newcommand{\DDD}{\mathscr{D}}
\newcommand{\Aff}{\mathscr{A}\mathrm{ff}}
\newcommand{\Lcdh}{\mathrm{L}_{\textup{cdh}}}
\DeclareMathOperator{\Fun}{Fun}
\DeclareMathOperator{\CAlg}{CAlg}
\newcommand{\PrLotimesst}{\Pr^{\mathrm{L},\otimes}_{\mathrm{st}}}
\newcommand{\dSch}{\mathrm{d}\Sch}
\newcommand{\Schfp}{\Sch^{\fp}}
\newcommand{\SchfpB}{\Schfp_{B}}
\newcommand{\SchfpBop}{\Sch^{\fp,\op}_{B}}
\DeclareMathOperator{\map}{map}
\newcommand{\N}{\mathbb{N}}
\newcommand{\Z}{\mathbb{Z}}
\newcommand{\Q}{\mathbb{Q}}
\newcommand{\A}{\mathbb{A}}
\newcommand{\E}{\mathbb{E}}
\newcommand{\Sm}{\mathscr{S}\mathrm{m}}
\newcommand{\SH}{\mathrm{S}\mathscr{H}}
\newcommand{\Sh}{\mathscr{S}\mathrm{h}}
\newcommand{\G}{\mathbb{G}}
\newcommand{\oO}{\mathcal{O}}
\DeclareMathOperator{\Spec}{Spec}
\newcommand{\Sch}{\mathscr{S}\mathrm{ch}}
\newcommand{\KM}{\mathrm{K}\mathscr{M}}
\newcommand{\Ee}{\mathrm{E}}
\newcommand{\Eu}{\underline{\mathrm{E}}}
\DeclareMathOperator{\Hom}{Hom}
\newcommand{\Homu}{\underline{\Hom}}
\DeclareMathOperator{\Frac}{Frac}
\DeclareMathOperator{\K}{K}
\DeclareMathOperator{\Ge}{G}
\DeclareMathOperator{\KH}{KH}
\DeclareMathOperator{\KGL}{KGL}
\DeclareMathOperator{\GGL}{GGL}
 \renewcommand{\to}[1][]{\overset{#1}{\rightarrow}}		
\newcommand{\inj}[1][]{\overset{#1}{\hookrightarrow}}		
\theoremstyle{definition}
\newtheorem{Def}{Definition}[section]
\newtheorem{Not}[Def]{Notation}
\newtheorem{Rem}[Def]{Remark}
\newtheorem{Rem*}[]{Remark}
\newtheorem{Cons}[Def]{Construction}
\newtheorem*{Conj*}{Conjecture}
\theoremstyle{plain}
\newtheorem{Prop}[Def]{Proposition}
\newtheorem{Thm}[Def]{Theorem}
\newtheorem{Lem}[Def]{Lemma}
\newtheorem{Cor}[Def]{Corollary}
    \newcounter{zaehler}
	\newtheorem{introthm}[zaehler]{Theorem}
\title{Duality for KGL-modules in motivic homotopy theory}
\author{Christian Dahlhausen}
\address{Institut für Mathematik, Universität Heidelberg, Im Neuenheimer Feld 205, 69121 Heidelberg, Germany}
\email{cdahlhausen@mathi.uni-heidelberg.de}
\author{Jeroen Hekking}
\address{Fakultät für Mathematik, Universität Regensburg, 93040 Regensburg, Germany}
\email{jeroen.hekking@ur.de}
\author{Storm Wolters}
\address{Mathematisch Instituut, Universiteit Leiden, Einsteinweg 55, 2333 CC Leiden, The Netherlands}
\email{s.wolters@math.leidenuniv.nl}
\begin{document}
\begin{abstract}
We prove a duality statement on modules over KH-theory in the stable motivic homotopy category whose dualizing object is given by G-theory, over any quasi-excellent scheme of characteristic zero.
\end{abstract}
\maketitle

\section*{Introduction}
Dualities play a ubiquitous role in mathematics. In its simplest form, a duality on a closed symmetric monoidal category $\CCC$ is given by a \emph{dualizing object}, i.e., an object $D$ of $\CCC$ such that the induced functor $\Homu_\CCC(-,D) \colon \CCC^\op\to\CCC$ is an equivalence of categories.
Prominent examples of such dualities are given by the functors $\Homu_k(-,k)$ on finite-dimensional $k$-vector spaces (for a field $k$) and the Pontryagin dual $\Homu(-,S^1)$ on locally compact abelian groups (where $\Homu(-,-)$ carries the compact-open topology).

Dualities in geometric contexts often appear within a six-functor formalism $\DDD$. For instance, \emph{Poincaré duality} attaches to any ``smooth'' morphism $f\colon X\to Y$ an invertible object $\omega_f$ in $\DDD(X)$ and an equivalence $f^!(-) \simeq f^*(-)\otimes\omega_f$, cf.\ \cite{ZavyalovPD}. Examples are classical Poincaré duality for closed manifolds or Poincaré duality for $\ell$-adic cohomology of schemes. 

If $\DDD$ is defined via a coefficient system over a base scheme $B$, then $\DDD$ satisfies Poincaré duality by purity. One can go further by defining---for any morphism $f\colon S\to B$ of finite type and $D \in \DDD(B)$ constructible and $\otimes$-invertible---a \emph{local duality functor} $\Homu_{\DDD(S)}(-,f^!D)$ self-adjoint on the right. Under certain assumptions this underpins a Grothendieck duality yielding that $f^!D$ is a dualizing object on the full subcategory $\DDD_c(S)$ of constructible objects in $\DDD(S)$, see \cite[Cor.~4.4.24]{CisinskiDeglise}.\footnote{More precisely, $\DDD$ is assumed to be dualizable, $\Q$-linear, and separated.}

The aim of this article is to establish another instance of Grothendieck duality, namely in the setting of modules over KH-theory in the stable motivic homotopy category. More precisely,  let $\KM$ be the six functor formalism of $\KGL$-modules in $\SH$ \cite[Cor.~13.3.5]{CisinskiDeglise}. For every $S\in\Sch^\fp_B$ over a Noetherian base $B$ there exists an object $\GGL_S$ in $\KM(S)$ representing G-theory (à la Thomason--Trobaugh) such that $f^!\GGL_Y \simeq \GGL_X$ for every morphism $f\colon X\to Y$ of finite type \cite{JinAlgebraicG}. The main result of the present paper is the following.

\begin{introthm}
\label{Thm:introA}
Let $S$ be a finite dimensional, quasi-excellent scheme of characteristic zero, and let $\GGL_S\in \KM(S)$ be the representing object of G-theory. Then $\GGL_S$ is dualizing:
\begin{enumerate}
    \item The canonical map
    \[ \lambda_S \colon \KGL_S \to \Homu_{\KM(S)}(\GGL_S,\GGL_S)\]
is an equivalence in $\KM(S)$ (Theorem~\ref{Thm:lambda-equivalence-characteristic-zero}).
    \item The object $\GGL_S$ is constructible in $\KM(S)$ (Lemma~\ref{Lem:GGL-constrcutible}).
    \item The endofunctor
\[ \KM_c(S)^\op \to \KM_c(S)\colon M \mapsto \Homu_{\KM(S)}(M,\GGL_S) \]
is a self-inverse anti-equivalence (Corollary~\ref{Cor:duality_char0}).
\end{enumerate}
Here, $\KM_c(S)$ denotes the full subcategory of $\KM(S) \coloneqq \Mod_{\KGL_S}(\SH(S))$ spanned by constructible objects. 
\end{introthm}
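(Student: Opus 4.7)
All three statements become transparent when $S$ is regular: by Quillen's theorem G-theory coincides with K-theory, so $\GGL_S \simeq \KGL_S$ is the monoidal unit of $\KM(S)$, making (1) tautological, (2) obvious, and (3) a standard self-duality statement for the unit in a closed symmetric monoidal stable $\infty$-category. My plan is therefore to reduce the general quasi-excellent characteristic-zero case to the regular one via Hironaka--Temkin resolution of singularities together with cdh descent, the latter being available for $\KGL$ (as it represents homotopy K-theory) and, in characteristic zero, for $\GGL$ as well (Haesemeyer and Cortiñas--Haesemeyer--Schlichting--Weibel).

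\textbf{Part (1).} I would argue by Noetherian induction on $\dim S$. The base case $S$ regular is tautological. For the inductive step, pick a resolution $p\colon \tilde S \to S$ that is an isomorphism outside a closed subscheme $Z \subsetneq S$ of strictly smaller dimension, with exceptional locus $\tilde i\colon E \hookrightarrow \tilde S$. The cdh descent property for $\GGL$ yields a fibre sequence in $\KM(S)$
\[
\GGL_S \to p_* \GGL_{\tilde S} \oplus i_{Z*} \GGL_Z \to (p\tilde i)_* \GGL_E .
\]
Applying $\Homu(-, \GGL_S)$ and using the standard identity $\Homu(f_! A, B) \simeq f_* \Homu(A, f^! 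B)$ together with Jin's identity $f^! \GGL_S \simeq \GGL_X$ (for $f\colon X \to S$ of finite type) transforms each term on the right into $f_* \Homu(\GGL_X, \GGL_X)$ for $f$ the corresponding proper map. The regular case identifies $\Homu(\GGL_{\tilde S}, \GGL_{\tilde S}) \simeq \KGL_{\tilde S}$; the inductive hypothesis applied to $Z$ and $E$ (both of dimension strictly less than $\dim S$) identifies the corresponding internal Hom objects with $\KGL_Z$ and $\KGL_E$. The resulting diagram is precisely the cdh-descent presentation of $\KGL_S$, yielding $\Homu(\GGL_S, \GGL_S) \simeq \KGL_S$; a final compatibility check shows this equivalence is the map $\lambda_S$.

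\textbf{Part (2).} Constructibility of $\GGL_S$ follows from the same induction: the regular case is trivial because $\GGL_S \simeq \KGL_S$ is the monoidal unit, and the inductive step exhibits $\GGL_S$ via the cdh fibre sequence above as the fibre of a map between proper pushforwards of constructible objects. Here $\GGL_{\tilde S} \simeq \KGL_{\tilde S}$ is the unit on the regular $\tilde S$, and $\GGL_Z$, $\GGL_E$ are constructible by the inductive hypothesis; since proper pushforward preserves constructibility and $\KM_c(S)$ is stable under finite (co)limits, $\GGL_S$ is constructible as well.

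\textbf{Part (3), and the main obstacle.} The third assertion is then formal. Constructibility of $\GGL_S$ ensures that $D \coloneqq \Homu(-, \GGL_S)$ preserves $\KM_c(S)$; the biduality map is an equivalence on the monoidal unit by (1); and for a general $M \in \KM_c(S)$ the identity $\Homu(f_!-, -) \simeq f_*\Homu(-, f^!-)$ combined with Jin's identity propagates the equivalence along the six-functor operations. One then uses that $\KM_c(S)$ is generated under these operations by objects pushed forward from regular schemes (where $\GGL \simeq \KGL$) to conclude. The main obstacle sits in part (1): producing the cdh-descent exact sequence for $\GGL$ as a fibre sequence inside $\KM(S)$ (rather than only on underlying spectra) compatible with the six-functor formalism, and verifying that the resulting equivalence $\Homu(\GGL_S, \GGL_S) \simeq \KGL_S$ is genuinely $\lambda_S$; this is where quasi-excellence and characteristic zero enter essentially, through the existence and inductive structure of Hironaka--Temkin resolutions.
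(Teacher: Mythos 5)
Your overall philosophy—reduce to the regular case, where $\GGL \simeq \KGL$ is the monoidal unit, via resolution of singularities and cdh descent—matches the paper, but your reduction mechanism for part (1) is genuinely different from the paper's. You propose a Noetherian induction on $\dim S$, applying a cdh fiber sequence at each step; the paper instead first lifts $\lambda$ to a morphism $\underline{\lambda}_B \colon \KH \to \Eu_B$ of cdh-sheaves on $\Sch_B^\fp$ (Construction~\ref{Con:underline_lambda}), then uses hypercompleteness of the cdh-topos to reduce to stalks at Henselian valuation rings (Proposition~\ref{Prop:reduction_to_Henselian_valuation_ring}), and such rings are filtered colimits of regular finite-type $S$-algebras by resolution of singularities, so the stalk statement becomes Quillen's $\GGL_T \simeq \KGL_T$ on regular $T$. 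Your parts (2) and (3) match the paper's Lemma~\ref{Lem:GGL-constrcutible} and Corollary~\ref{Cor:duality_char0} closely.

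The real gap is in what you call the ``final compatibility check'' in part (1), which is far from routine and is in fact where the paper does its real work. After applying $\Homu(-,\GGL_S)$ to the cdh (co)descent sequence for $\GGL$ and invoking the inductive hypothesis and the regular case, you obtain \emph{some} fiber sequence with $\Ee_S \coloneqq \Homu(\GGL_S,\GGL_S)$ at one end and $p_*\KGL_{\tilde S}$, $i_{Z*}\KGL_Z$, $(p\tilde i)_*\KGL_E$ at the others; but to conclude $\Ee_S \simeq \KGL_S$ \emph{via $\lambda_S$} you need to know that this sequence agrees with the cdh-descent fiber sequence for $\KGL_S$ termwise \emph{and} arrowwise, under the identifications coming from $\lambda$ on $\tilde S$, $Z$, $E$. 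This is precisely the assertion that $\lambda$ is compatible with the cdh descent structure—that is, a morphism of cdh-sheaves—which is what Construction~\ref{Con:underline_lambda} establishes through a nontrivial argument (left Kan extension of connective K-theory from smooth affines, sheafification, and a whiskering compatibility). In your inductive formulation this coherence must moreover be carried through every step of the induction: the inductive hypothesis has to be that $\lambda_Z$ is an equivalence \emph{compatibly with descent along further blowups}, not merely that some abstract equivalence $\Ee_Z \simeq \KGL_Z$ exists. There is also a directionality subtlety: cdh-codescent for $f_! f^!$ exhibits $\GGL_S$ as a \emph{colimit} (cofiber of $(p\tilde i)_!\GGL_E \to p_!\GGL_{\tilde S}\oplus i_{Z!}\GGL_Z$), so the contravariant $\Homu(-,\GGL_S)$ returns $\Ee_S$ as a fiber, and matching this to the cdh-descent fiber sequence for the sheaf $\KGL$ again requires identifying the connecting maps. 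None of this is insurmountable, but it is the heart of the matter rather than a routine check.
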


By an abstract argument on duality (Corollary~\ref{Cor:abstract_duality}),  statement (3) is a purely formal consequence of (1) and (2). The first statement can be shown cdh-locally on stalks, i.e., on the spectra of Henselian valuation rings (Proposition~\ref{Prop:reduction_to_Henselian_valuation_ring}). Using resolutions of singularities we can write any valuation ring as a filtered colimit of regular rings on which K-theory and G-theory agree so that the statement becomes trivial.

\subsection*{Relation to previously known results.}
There are many results in the literature on the question whether $D\coloneqq f^!1_S$  for given $f\colon X\to S$ is dualizing, which entails  that the canonical map
\[ M \to \Homu(\Homu(M,D),D) \]
is an equivalence for every constructible object $M$. To name a few, consider the work of Ayoub \cite[Thm.~2.3.73]{AyoubThesis}, Cisinski--Deglise \cite[Cor.~4.4.24]{CisinskiDeglise}, or Bondarko--Deglise \cite[Thm.~2.4.9]{BondarkoDeglise2017}.
These proofs use that the category of constructible objects is generated by objects coming from regular schemes, see \cite[Prop.~2.2.27]{AyoubThesis}, \cite[Prop.~7.2]{cisinski-deglise-integral-mixed-motives}, and \cite[Cor.~2.4.8]{BondarkoDeglise2017}; this, in turn, is proven by using some form of resolution of singularities (by modifications or by alterations).

\newcommand{\pinv}{[\tfrac{1}{p}]}
Following this approach together with absolute purity one can show Theorem~A for schemes $S$ which are finitely presented over a regular and excellent $\Q$-scheme $B$, using \cite[Thm.~2.4.9]{BondarkoDeglise2017}. In characteristic $p >0$, one can show a version of Theorem~A with $\KGL_S$ replaced by $\KGL_S\pinv$ for $S$ finitely presented over a perfect field, also by \cite[Thm.~2.4.9]{BondarkoDeglise2017}. Although probably known to experts, we review these results in Proposition~\ref{Prop:BD_duality} and Remark~\ref{Rem:purity_duality}.

Our new contribution to the existing literature lies in a strengthening of these known duality results by allowing any quasi-excellent $\Q$-scheme $S$, and in giving a different approach.  We enhance the map $\lambda_S$ to a map of cdh-sheaves (Construction~\ref{Con:underline_lambda}). This allows us to check the statement on stalks at Henselian valuation rings, and these are ind-regular.

\subsection*{Acknowledgements}
The authors cordially thank Denis-Charles Cisinski for proposing the subject of the paper, for giving helpful input, and for valuable discussions and constructive feedback.
Moreover, the authors thank
Fangzhou Jin, Robin de Jong, and Adeel Khan for helpful discussions.

JH has been generously supported by the Knut and Alice Wallenberg Foundation, project number 2021.0287.
The project was jointly supported by the Deutsche Forschungsgemeinschaft (DFG).
Namely, JH is supported by the Collaborative Research Centre SFB 1085 \emph{Higher Invariants -- Interactions between Arithmetic Geometry and Global Analysis}, project number 224262486, and CD is supported by the Collaborative Research Centre TRR 326 \textit{Geometry and Arithmetic of Uniformized Structures (GAUS)}, project number 444845124. Moreover, JH and SW thank GAUS for financing a work stay in Heidelberg in August 2024.
CD thanks Elden Elmanto for hosting him at the University of Toronto in September 2024.

\subsection*{Conventions and notation}
Unless otherwise stated, we assume that all schemes (hence all morphisms) are quasi-compact and quasi-separated (qcqs).
\begin{enumerate}
    \item Write $\PrLo$ for the $\infty$-category of  presentably symmetric monoidal $\infty$-categories, with colimit-preserving, symmetric monoidal functors as morphisms. Write $\PrLotimesst \subset \PrLo$ for the full subcategory spanned by $\CCC \in \PrLo$ which are stable.
   \item For a scheme $S$, let $\Sch_S$ (resp.\ $\Schfp_S$, resp.\ $\Sm_S$) be the category of all (resp.\ finitely presented, resp.\ smooth) $S$-schemes.
    \item Let $\SH(S)$ be the stable motivic homotopy category.
    \item Let $\VVV$ be a closed symmetric monoidal $\infty$-category. For a $\VVV$-enriched $\infty$-category $\CCC$, write $\Hom_\CCC(-,-)_\VVV$ for the mapping object in $\VVV$. For short, we write $\Hom_\CCC(-,-) \coloneqq \Hom_\CCC(-,-)_\SSS$ for the ordinary mapping space. If $\CCC$ is enriched over itself, we write $\Homu_\CCC(-,-) \coloneqq \Hom_\CCC(-,-)_{\CCC}$ for the inner hom. In the case $\VVV = \Sp$ is the $\infty$-category of spectra, we also write $\map(-,-) \coloneqq \Hom_\CCC(-,-)_{\Sp}$.
\end{enumerate}

\section{Duality for coefficient systems}
In this section we briefly explain how dualizing objects lead to duality in the context of  coefficient systems (Definition \ref{Def:coefficient_system}).

Throughout we fix a scheme $B$ and  a  coefficient system 
\[ \DDD \colon \SchfpBop \to \PrLotimesst. \]

\subsection*{Constructible objects}
\begin{Def}
For $S \in \SchfpB$, let $\DDD_c(S)$ be the thick subcategory of $\DDD(S)$ generated by objects of the form $f_\sharp 1_Y(n)$ for all smooth morphisms $f\colon Y\to S$ and all $n\in\Z$.\footnote{Recall that for a stable $\infty$-category $\CCC$ and a class $K$ of objects in $\CCC$, the thick subcategory of $\CCC$ generated by $K$ is the smallest stable subcategory of $\CCC$ which contains $K$ and is closed under retracts.} We say that $M \in \DDD(S)$ is \emph{constructible} if it lies in $\DDD_c(S)$.
\end{Def}

If, for any smooth morphism $f\colon Y\to X$ and any $n\in\Z$, the object $f_\#1_Y(n)$ is compact in $\DDD(X)$, then $\DDD_c(X)$ is precisely the full subcategory of compact objects \cite[Prop.~1.4.11]{CisinskiDeglise}.  This is the case for $\SH$ and more generally for the coefficient system $ \Mod_R(\SH)$ where $R \in \CAlg(\SH)$ as in Notation~\ref{uppershriek_modules} below.\footnote{The latter follows from the fact that the forgetful functor $\Mod_R(\SH) \to \SH$ commutes with filtered colimits \cite[Cor.~4.2.3.5]{LurieHA}.}

\begin{Prop}
\label{Prop:constructive-objects-characterisation}
    For any $S\in\SchfpB$, the category $\DDD_c(S)$ is generated by objects of the form $f_*(1_X)(n)$ for proper (or even projective) morphisms $f\colon X\to S$ and $n\in\Z$. 
\end{Prop}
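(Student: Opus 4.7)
The plan is to prove the stronger projective version, from which the proper version will follow; write $\mathscr{P}(S)\subseteq\DDD(S)$ for the thick subcategory generated by pushforwards $g_*(1_X)(m)$ along projective $g\colon X\to S$. I would establish two containments: each generator $f_\sharp 1_Y(n)$ of $\DDD_c(S)$ (with $f\colon Y\to S$ smooth) lies in $\mathscr{P}(S)$, and conversely each $g_*(1_X)(n)$ for $g$ projective is constructible. The main obstacle is in the first containment, namely securing a projective (rather than merely proper) compactification of the smooth morphism $f$: pure Nagata only produces a proper compactification, and one needs Chow's lemma arranged to be an isomorphism over the dense open $Y$.

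For the easier containment $\mathscr{P}(S)\subseteq\DDD_c(S)$, I would factor a projective morphism $g\colon X\to S$ as a closed immersion $i\colon X\hookrightarrow\P^N_S$ followed by the smooth proper projection $p\colon\P^N_S\to S$. The localization cofiber sequence
\[
j_\sharp 1_U \to 1_{\P^N_S} \to i_*1_X
\]
attached to the complementary open immersion $j\colon U\coloneqq\P^N_S\setminus X\hookrightarrow\P^N_S$ exhibits $i_*1_X$ as the cofiber of two objects in $\DDD_c(\P^N_S)$, hence constructible. By purity, for the smooth proper morphism $p$ of relative dimension $N$ there is a natural equivalence $p_*(-)\simeq p_\sharp\bigl((-)(-N)[-2N]\bigr)$; since $p_\sharp$ is exact and sends each generator $h_\sharp 1_Z(m)$ of $\DDD_c(\P^N_S)$ to $(p\circ h)_\sharp 1_Z(m)\in\DDD_c(S)$, it preserves constructibility. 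Hence $g_*(1_X)(n)\in\DDD_c(S)$.

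For the reverse containment $\DDD_c(S)\subseteq\mathscr{P}(S)$, let $f\colon Y\to S$ be smooth of relative dimension $d$. Using Nagata compactification followed by a Chow-type projective modification arranged to be an isomorphism over $Y$, write $f=\bar{f}\circ j$ with $j\colon Y\hookrightarrow\bar{Y}$ an open immersion, $\bar{f}\colon\bar{Y}\to S$ projective, and $i\colon Z\coloneqq\bar{Y}\setminus Y\hookrightarrow\bar{Y}$ the closed complement. Purity for the smooth morphism $f$ combined with $\bar{f}_!\simeq\bar{f}_*$ gives
\[
f_\sharp 1_Y(n)\simeq f_!(1_Y)(n+d)[2d]\simeq \bar{f}_* j_!(1_Y)(n+d)[2d].
\]
Applying $\bar{f}_*$ to the localization cofiber sequence $j_!1_Y\to 1_{\bar{Y}}\to i_*1_Z$ produces a cofiber sequence
\[
\bar{f}_* j_!1_Y\to \bar{f}_* 1_{\bar{Y}}\to (\bar{f}\circ i)_*1_Z
\]
whose outer terms are projective pushforwards of units. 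Hence the middle term, and therefore its twist-shift $f_\sharp 1_Y(n)$, lies in $\mathscr{P}(S)$. The proper variant of the proposition then follows from the projective one modulo one further induction on $\dim X$: Chow's lemma upgrades a proper morphism to a projective one modulo a lower-dimensional closed subscheme, and the blow-up Mayer--Vietoris cofiber sequence available in $\DDD$ closes the induction.
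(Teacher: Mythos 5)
Your overall strategy — factoring a projective morphism through $\P^N_S$ plus localization for one inclusion, and a projective compactification plus localization for the other — is essentially the one underlying the cited results of Ayoub and Cisinski--Déglise (the paper itself gives only the citation). However, there is a genuine gap: at two places you invoke purity in the form $p_*(-)\simeq p_\sharp\bigl((-)(-N)[-2N]\bigr)$ and $f_\sharp 1_Y(n)\simeq f_!(1_Y)(n+d)[2d]$. These identities say that $\Sigma^{T_p}\simeq (N)[2N]$ and $\Sigma^{T_f}\simeq (d)[2d]$, i.e.\ that the Thom twist by the tangent bundle reduces to a Tate twist. That is an \emph{orientation} hypothesis. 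Definition~\ref{Def:coefficient_system} only asks for Thom stability ($p_\sharp s_*$ invertible), not orientation, and for a non-oriented coefficient system such as $\SH$ itself the Thom twist $\Sigma^V$ genuinely depends on $V$ and not just on its rank. So your argument proves the proposition only for oriented $\DDD$, whereas the statement (and its use in Proposition~\ref{Prop:phiM_equivalence}) is for arbitrary $\DDD$. For the paper's main application $\DDD = \KM = \Mod_{\KGL}$ there is an orientation, so the downstream results are unaffected, but as a proof of the stated proposition this is incomplete.

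The gap is more serious in the second inclusion. The correct form of purity gives $f_\sharp 1_Y(n)\simeq f_!\bigl(\Sigma^{T_f}1_Y\bigr)(n)$, and $\Sigma^{T_f}1_Y$ is a Thom object, not a Tate twist, so the localization triangle $j_!1_Y\to 1_{\bar Y}\to i_*1_Z$ no longer directly applies. A way to repair this (without orientation) is to prove, for every smooth $f\colon Y\to S$ and every vector bundle $E$ on $Y$, that $f_!\bigl(\Sigma^E 1_Y\bigr)(n)$ lies in $\mathscr{P}(S)$, by induction on the rank of $E$. The base case $E=0$ is exactly the Nagata/localization argument you give, showing $f_!1_Y(n)\in\mathscr{P}(S)$. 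For the inductive step, use the cofiber sequence $q_\sharp 1_{E^\circ}\to 1_Y\to \Sigma^E 1_Y$ (with $q\colon E^\circ\to Y$ the complement of the zero section); applying $f_!$ reduces to $(fq)_!\bigl(\Sigma^{q^*E}1_{E^\circ}\bigr)$, and on $E^\circ$ the tautological nowhere-vanishing section exhibits $q^*E$ as an extension of a rank-$(d-1)$ bundle by $\oO$, so $\Sigma^{q^*E}\simeq\langle 1\rangle\Sigma^{F}$ for a bundle $F$ of lower rank, closing the induction. Specializing to $E=T_f$ gives $f_\sharp 1_Y(n)\in\mathscr{P}(S)$. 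Similarly, in the first inclusion one should observe directly that $\Sigma^{-T_p}$ preserves constructibility on $\P^N_S$ (e.g.\ by trivializing $T_p$ stably using the Euler sequence and the global generation of $\oO(1)$), rather than replacing it by a Tate twist. Either add an explicit orientation hypothesis to your write-up or incorporate a rank-induction along these lines.
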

\begin{proof}
    See \cite[Lem.~2.2.23]{AyoubThesis} or \cite[Prop.~4.2.13]{CisinskiDeglise}.
\end{proof}

\subsection*{(Weakly) dualizing objects}
Now fix an object $I_B \in \DDD(B)$. For a morphism $f \colon S \to B$ of finite presentation, put $I_S \coloneqq f^!I_B$.\footnote{Recall that there is no separatedness assumption needed for the existence of $f^!$ by \cite[Thm.~2.34]{KhanVoevodsky}, following \cite{liu-zheng}.} For $M \in \DDD(S)$ define the \emph{weak dual} as 
$M^\vee \coloneqq \Homu_{\DDD(S)}(M,I_S)$. 
\begin{Def}
Let us call the object $I_S$ \emph{weakly dualizing} if  the canonical map
\[ \lambda_S \colon 1_S \to I_S^\vee = \Homu_{\DDD(S)}(I_S,I_S) \]
is an equivalence.
Furthermore, we call $I_S$ \emph{dualizing} if it is both weakly dualizing and constructible.
\end{Def}
\begin{Prop}
\label{Prop:phiM_equivalence}
The following are equivalent:
    \begin{enumerate}
        \item For every $S \in \SchfpB$ and every $M\in\DDD_c(S)$ the canonical map 
        \[ \phi_M \colon M \to M^{\vee \vee} \]
        is an equivalence.
        \item For every $S \in \SchfpB$ the object $I_S$ is weakly dualizing.
    \end{enumerate}
\end{Prop}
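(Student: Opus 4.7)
The implication $(1)\Rightarrow(2)$ is a direct specialization. The unit $1_S$ is constructible as $\id_\sharp 1_S(0)$, and one computes $1_S^\vee = \Homu(1_S, I_S) \simeq I_S$, so that $1_S^{\vee\vee} \simeq \Homu(I_S, I_S)$. Under these identifications the map $\phi_{1_S}$ is precisely $\lambda_S$, so if (1) holds then $\lambda_S$ is an equivalence for every $S \in \SchfpB$.

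For $(2)\Rightarrow(1)$, my plan is to fix $S \in \SchfpB$ and consider the full subcategory $\EEE(S) \subseteq \DDD(S)$ of objects $M$ on which $\phi_M$ is an equivalence. Since $\phi$ is a natural transformation between the identity and the composite of two contravariant exact functors $\Homu(-, I_S)$, and since equivalences are detected by the (stable) structure and are closed under retracts, $\EEE(S)$ is a thick subcategory of $\DDD(S)$. Invoking Proposition~\ref{Prop:constructive-objects-characterisation}, it then suffices to verify that $M = f_*(1_X)(n)$ lies in $\EEE(S)$ for every proper $f\colon X\to S$ and every $n\in\Z$.

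For such $M$, I would compute both duals using the three ingredients (i) $f$ proper, so $f_! = f_*$ and the adjunction $f_! \dashv f^!$ combined with the projection formula yields $\Homu(f_! A, B) \simeq f_* \Homu(A, f^! B)$; (ii) transitivity of $(-)^!$, giving $f^! I_S \simeq I_X$; and (iii) invertibility of the Tate twist. This produces
\[
M^\vee \simeq f_* \Homu(1_X(n), I_X) \simeq f_* I_X(-n),
\]
\[
M^{\vee\vee} \simeq f_* \Homu(I_X(-n), I_X) \simeq f_*\Homu(I_X, I_X)(n).
\]
Applying hypothesis $(2)$ at the scheme $X$, the map $\lambda_X \colon 1_X \to \Homu(I_X, I_X)$ is an equivalence, and pushing forward gives $M^{\vee\vee} \simeq f_*(1_X)(n) = M$.

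The one step that is not a formal manipulation is checking that this chain of equivalences really is $\phi_M$ and not some other auto-equivalence of $M$. I would handle this by a Yoneda argument: $\phi_M$ is adjoint to the evaluation $M \otimes M^\vee \to I_S$, and one traces that the evaluation pairing $f_*(1_X(n)) \otimes f_*(I_X(-n)) \to I_S$ produced by the identifications above is, via the projection formula and the counit $f_* f^! I_S \to I_S$, the canonical one. This is the main bookkeeping obstacle; everything else is either a formal consequence of the six-functor formalism or of the characterization of $\DDD_c(S)$ in Proposition~\ref{Prop:constructive-objects-characterisation}.
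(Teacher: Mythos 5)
Your proof is essentially correct and takes the same overall route as the paper's. The $(1)\Rightarrow(2)$ direction is handled identically, and for $(2)\Rightarrow(1)$ both arguments set up the thick subcategory of objects on which $\phi$ is invertible and reduce, via Proposition~\ref{Prop:constructive-objects-characterisation}, to $M = f_*(1_X)(n)$ with $f$ proper.

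The one place where your write-up is genuinely weaker is exactly the step you yourself flag as ``the main bookkeeping obstacle.'' Having computed $M^\vee \simeq f_*(I_X(-n))$ and $M^{\vee\vee} \simeq f_*\Homu(I_X,I_X)(n)$ as objects, one still has to check that the resulting composite equivalence $M\simeq M^{\vee\vee}$ is the map $\phi_M$ and not some other automorphism; this is the crux and cannot be waved away. The paper sidesteps the object-level tracing by first recording the equivalence $(f_*N)^{\vee} \simeq f_*(N^{\vee})$ \emph{naturally in $N\in\DDD(X)$} (a consequence of $f_!\dashv f^!$, $f_*=f_!$, and the internal adjunction), and then appealing to naturality of $\phi$ with respect to this identification to conclude $\phi_{f_*1_X} \simeq f_*(\phi_{1_X})$ directly. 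Phrasing the key equivalence as a natural transformation of contravariant functors rather than an isomorphism of specific objects is what makes the compatibility with $\phi$ a formal coherence statement rather than a computation. Your proposed Yoneda argument — tracing the evaluation pairing $f_!(1_X(n))\otimes f_*(I_X(-n))\to I_S$ through the projection formula and the counit $f_!f^!I_S\to I_S$ — would also close the gap and is morally the same check, but as written it is only sketched; if you keep this route you should actually carry out the identification, or better, just promote your computation to the natural-in-$N$ statement and then the naturality of $\phi$ does the work for you.
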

\begin{proof}
    Clearly, if $\phi_M$ is invertible for all $M \in \DDD_c(S)$ then $\lambda_S$ is invertible since $\lambda_S \simeq \phi_{1_S}$. 
    
    For the converse implication note that the full subcategory of $\DDD(S)$ for which $\phi_M$ is an equivalence is a thick subcategory. By Proposition~\ref{Prop:constructive-objects-characterisation}, $\DDD_c(S)$ is the thick subcategory generated by objects of the form $M=f_*(1_X)(n)$ where $f \colon X \to S$ is proper and $n\in\Z$. Thus it suffices to show that $\phi_M$ is an equivalence for such $M$. Since $(M(n))^\vee \simeq M^\vee(-n)$, we may assume $M = f_*(1_X) \simeq f_!(1_X)$.
    Note that for any $N \in \DDD(X)$ it holds
    \[ (f_*N)^{\vee} \simeq \Homu_{\DDD(S)}(f_!N,I_S) \simeq f_* \Homu_{\DDD(X)}(N,I_X) \simeq f_*(N^\vee). \]
    By naturality of $\phi$ it holds that
    \[ \phi_M \simeq f_*(\phi_{1_X}) \colon f_*1_X \to (f_*1_X)^{\vee \vee} \simeq f_*(1_X^{\vee \vee}).  \]
    Since $\lambda_X\simeq \phi_{1_X}$ is assumed an equivalence, the claim follows.
\end{proof}

\begin{Cor}
\label{Cor:abstract_duality}
    Suppose that for all $S\in\SchfpB$ the object $I_S$ is dualizing. Then taking weak duals induces a self-inverse anti-equivalence
    \[ \DDD_c(S)^\op \to \DDD_c(S) \colon M \mapsto M^\vee \]
    for all $S \in \SchfpB$.
\end{Cor}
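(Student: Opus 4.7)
The plan is to combine Proposition~\ref{Prop:phiM_equivalence} with a check that the weak-dual functor preserves constructibility. Since $I_S$ is assumed dualizing, hence in particular weakly dualizing, for every $S \in \SchfpB$, Proposition~\ref{Prop:phiM_equivalence} already gives $\phi_M \colon M \simeq M^{\vee\vee}$ for every $M \in \DDD_c(S)$. Hence $(-)^{\vee\vee} \simeq \id$ on $\DDD_c(S)$, and it only remains to show that $(-)^\vee$ restricts to a functor $\DDD_c(S)^\op \to \DDD_c(S)$; granted this, the claimed self-inverse anti-equivalence will follow formally.

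To establish the restriction, I would consider the full subcategory $\CCC \subset \DDD(S)$ of objects $M$ for which $M^\vee \in \DDD_c(S)$. The functor $\Homu_{\DDD(S)}(-,I_S) \colon \DDD(S)^\op \to \DDD(S)$ is exact and preserves retracts, so $\CCC$ is a thick subcategory of $\DDD(S)$. By Proposition~\ref{Prop:constructive-objects-characterisation} it therefore suffices to prove that $f_*(1_X)(n) \in \CCC$ for every proper morphism $f \colon X \to S$ and every $n \in \Z$. Using $f_* \simeq f_!$ for proper $f$ together with the $(f_!,f^!)$-adjunction,
\[ \bigl(f_*(1_X)(n)\bigr)^\vee \simeq \Homu_{\DDD(S)}\bigl(f_!(1_X)(n),I_S\bigr) \simeq f_* \Homu_{\DDD(X)}\bigl(1_X(n), f^! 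I_S\bigr) \simeq f_*\bigl(I_X(-n)\bigr), \]
where $f^! I_S \simeq I_X$ follows from the compatibility of upper-shriek with composition applied to the structure maps $X \to S \to B$. Now $I_X$ is constructible by hypothesis, hence so is $I_X(-n)$, and proper pushforward preserves constructible objects (for $f$ proper, $f_* = f_!$ sends the standard generators of $\DDD_c(X)$ into $\DDD_c(S)$ by composition with the structure map).

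The corollary is thus entirely formal once Proposition~\ref{Prop:phiM_equivalence} is in hand. The only non-trivial inputs are the hypothesis that $I_X$ itself is constructible for every $X \in \SchfpB$ (so that the dual of a proper pushforward of a twist of the unit comes back into $\DDD_c$) and the standard six-functor fact that proper pushforward preserves constructibility. I do not expect any serious obstacle: the main points that might warrant a short remark are the identification $f^! I_S \simeq I_X$ and the preservation of $\DDD_c$ under $f_*$ for proper $f$, both of which are routine.
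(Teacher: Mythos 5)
Your proposal is correct and follows essentially the same route as the paper: combine the biduality statement of Proposition~\ref{Prop:phiM_equivalence} with the observation that, on the generators $f_*(1_X)(n)$ of Proposition~\ref{Prop:constructive-objects-characterisation}, the weak dual is $f_*(I_X(-n))$, which is constructible because $I_X$ is constructible by hypothesis and (exceptional) pushforward along proper maps preserves constructibility. The only cosmetic difference is that the paper cites \cite[Thm.~2.60]{KhanVoevodsky} for the latter preservation statement, whereas you argue it directly by sending generators to generators, and you spell out the thick-subcategory reduction that the paper leaves implicit.
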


\begin{proof}
    Let $S \in \SchfpB$ and let $M \coloneqq p_! 1_X$ for $p \colon X \to S$ finitely presented and proper. If $I_X$ is constructible, then also
    \begin{align*}
        M^\vee \simeq \Homu_{\DDD(S)}(p_!1_X,I_S) \simeq p_*\Homu_{\DDD(X)}(1_X,p^!I_S) \simeq 
         p_!I_X
    \end{align*}
     is constructible \cite[Thm.~2.60]{KhanVoevodsky}.  
     Now the claim follows from Proposition~\ref{Prop:phiM_equivalence} and its proof.
\end{proof}

\subsection*{Known duality results}
If $B$ is regular and of finite type over an excellent Noetherian scheme of dimension at most 2 and $I_B = 1_B$, then under certain conditions on the coefficient system $\DDD$, the objects $I_S$ are dualizing for all $S$ separated and of finite type over $B$ \cite[Thm.~4.4.21]{CisinskiDeglise}.  Most notably, $\DDD$ is required to be $\Q$-linear. We are interested in the case where $\DDD$ is the category of $\KGL$-modules, which is not $\Q$-linear. Let us review known results relevant to this case. 

\begin{Not}
\label{uppershriek_modules}
    Suppose that the residue fields of $B$ are all of exponential characteristic $p$. Fix $R_B \in \CAlg(\SH(B))$. Consider the functor
    \begin{align*}
        \MMM \colon \SchfpBop &\to \PrLotimesst \\
        (f \colon S \to B) &\mapsto \Mod_{f^*R_B}(\SH(S))[1/p],
    \end{align*}
    and take $I_B \coloneqq 1_B$. Write also $R_S \coloneqq f^*R_B$, or simply $R$ when the base is clear. Observe that we have canonical equivalences
    \[ \Mod_R(\SH(S))[1/p] \simeq \Mod_{R[1/p]}(\SH(S)) \]
    by the Barr--Beck--Lurie theorem.

    Note that $\MMM$ is a coefficient system for which the forgetful functors 
    \[ U_S \colon \MMM(S) \to \SH(S) \]
    commute with $f^*,f_*$ for any $f$ \cite[Cor.~13.3.3]{CisinskiDeglise}, \cite[Thm.~3.15]{GallauerSix}. It also holds that they commute with $f^!$. Indeed, this follows by passing to left adjoints, since by the projection formula it holds
    \[ f_!((-)\otimes R) \simeq f_!(-) \otimes R. \]
\end{Not}

\begin{Prop}[Bondarko--Deglise]
\label{Prop:BD_duality}
    Suppose that $I_X$ is $\otimes$-invertible for all regular $X \in \SchfpB$, and assume either:
    \begin{enumerate}
        \item $p=1$ and $B$ is regular, excellent, and finite dimensional, or
        \item $p>1$ and $B = \Spec (k)$ for a perfect field $k$.\footnote{One can get rid of the perfectness assumption by the same techniques as used in \cite[Thm.~3.1.1]{Elmanto-Khan-perfection}.}
    \end{enumerate}
    Then $I_S \in \MMM(S)$ is weakly dualizing for all $S \in \SchfpB$. 
\end{Prop}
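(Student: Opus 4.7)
The plan is to invoke Proposition~\ref{Prop:phiM_equivalence}, which reduces the weak duality of $I_S$ to showing that the biduality map $\phi_M \colon M \to M^{\vee\vee}$ is an equivalence for every constructible $M \in \MMM_c(S)$. Since the full subcategory of objects for which $\phi_M$ is an equivalence is thick, it suffices to verify this on a family of generators. We would therefore first replace the generators $f_\sharp 1_Y(n)$ (equivalently, by Proposition~\ref{Prop:constructive-objects-characterisation}, $p_* 1_X(n)$ for proper $p\colon X\to S$) by generators of the form $p_* 1_X(n)$ with $X$ \emph{regular}. This is precisely the content of \cite[Cor.~2.4.8]{BondarkoDeglise2017}, whose hypotheses are met in both listed cases: in case (1) one uses Hironaka's resolution of singularities inside $\SchfpB$, and in case (2) one uses de Jong--Gabber alterations, whose possibly non-trivial degrees become invertible after inverting $p$, so that $p_*1_X$ is a retract of the pushforward from a regular alteration via the trace.

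Second, we carry out the biduality computation for such a generator $M = p_* 1_X$ with $p\colon X\to S$ proper and $X$ regular. Using $p_* \simeq p_!$ (as $p$ is proper), the projection/internal-hom adjunction, and $p^! I_S \simeq I_X$, we get
\[
M^\vee \simeq \Homu_{\MMM(S)}(p_!1_X, I_S) \simeq p_*\Homu_{\MMM(X)}(1_X, I_X) \simeq p_* I_X,
\]
and iterating,
\[
M^{\vee\vee} \simeq \Homu_{\MMM(S)}(p_! I_X, I_S) \simeq p_*\Homu_{\MMM(X)}(I_X, I_X).
\]
Since $X$ is regular, the assumption that $I_X$ is $\otimes$-invertible in $\MMM(X)$ yields $\Homu_{\MMM(X)}(I_X, I_X) \simeq 1_X$, hence $M^{\vee\vee} \simeq p_* 1_X \simeq M$. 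By naturality of $\phi$, the biduality map $\phi_M$ is identified under these equivalences with $p_*(\phi_{1_X})$, and $\phi_{1_X}$ in turn is identified with the evaluation of the canonical map $1_X \to \Homu(I_X, I_X)$, which is an equivalence because $I_X$ is invertible.

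The main obstacle is the first step, namely the reduction to regular $X$. The geometric input (resolution, or alterations of degree prime to~$p$) is exactly what forces the case distinction in the hypotheses, and it is the reason for inverting $p$ in $\MMM$: in positive characteristic, the retract argument built from an alteration of generic degree $d$ requires $d$ to act invertibly, which is ensured after $[1/p]$ by Gabber's refinement. Once this reduction is granted, the rest of the argument is the formal manipulation above together with the invertibility of $I_X$ on regular schemes; the Tate twists $(n)$ pose no difficulty since they commute with $(-)^\vee$ up to a sign twist, and stability of the property under cones and retracts is automatic as $(-)^\vee$ is exact.
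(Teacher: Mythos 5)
Your proof is correct and follows essentially the same approach as the paper: reduce via \cite[Cor.~2.4.8]{BondarkoDeglise2017} to generators $p_*1_X(n)$ with $p$ proper and $X$ regular, identify $\phi_M$ with $p_*(\phi_{1_X}) \simeq p_*(\lambda_X)$ by naturality and the adjunction $\Homu(p_!-,I_S)\simeq p_*\Homu(-,I_X)$, and conclude from the $\otimes$-invertibility of $I_X$ on regular $X$. The extra detail you give about Hironaka versus Gabber alterations, and why inverting $p$ is needed in case (2), is implicit in the paper's citation and is a fine addition, but the underlying argument is identical.
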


\begin{proof}
    For $S \in \SchfpB$ the category $\MMM_c(S)$ is generated by objects of the form $f_*(1_X)(n)$, where $f \colon X \to S$ is proper, $X$ is regular, and $n\in \Z$ \cite[Cor.~2.4.8]{BondarkoDeglise2017}. We follow a similar strategy as in the proof of Proposition~\ref{Prop:phiM_equivalence}. Hence it suffices to show that
    \[ \phi_{M} \colon M \to M^{\vee \vee} \]
    is invertible, for each such $f \colon X \to S$ and $M \coloneqq f_*(1_X)$. And again we have
    \[ \phi_M \simeq f_*(\phi_{1_X}) \simeq f_*(\lambda_X) \]
    where $\lambda_X \colon 1_X \to \Homu_{\MMM(S)}(I_X,I_X)$ is the canonical map. Since $X$ is regular, $I_X$ is $\otimes$-invertible by assumption, hence $\lambda_X$ is invertible, and the claim follows.    
\end{proof}

\begin{Rem}
\label{Rem:purity_duality}
    In the setting of Proposition~\ref{Prop:BD_duality} the condition that $I_X$ is $\otimes$-invertible for all regular $X \in \SchfpB$ can be replaced by the condition that $R_S$ satisfies absolute purity in the sense of \cite[Def.~4.3.11]{DegliseFundamental} for all $S \in \SchfpB$. In fact, in this case we conclude that $I_S$ is dualizing for all $S \in \SchfpB$ by \cite[Thm.~2.4.9]{BondarkoDeglise2017}.  

    To see this, it suffices to verify that $\MMM$ satisfies absolute purity in the sense of \cite[Def.~1.3.17]{BondarkoDeglise2017}. By \cite[\S 4.3.4]{DegliseFundamental} it holds that $\MMM$ comes with a system of fundamental classes. The associated purity transformation
    \[ \Sigma^{-N_i} i^* \to i^! \]
    for a regular closed immersion $i \colon Z \to S$ with associated normal bundle $N_i$ between regular schemes in $\SchfpB$ is mapped to the purity transformation in $\SH$.\footnote{This is because the construction of the fundamental classes in question can be done using only the operations $(-)^*, (-)_*, (-)^!$ by \cite[Rem.~3.2.6]{DegliseFundamental}, and these commute with the forgetful functor by Notation~\ref{uppershriek_modules}.} By assumption on $R_S$, the latter is invertible. Since the forgetful functor $\MMM \to \SH$ is conservative, we conclude that the system of fundamental classes in $\MMM$ satisfies the requirements laid out in \cite[Prop.~1.3.21]{BondarkoDeglise2017}.
\end{Rem}

\section{Duality for KGL-modules in characteristic zero}
Throughout this section, let $B$ be a Noetherian base scheme. 

\begin{Rem}[K-theory]
For any  scheme $S$, denote by $\K(S)$ the non-connective K-theory spectrum (à la Thomason--Trobaugh and Blumberg--Gepner--Tabuada) and by $\KH(S)$ its homotopy invariant version due to Weibel \cite{WeibelKH}. Let $\KGL_S \in \SH(S)$ be the representing object of the functor $\KH(-)$ \cite[Thm.~2.20]{CisinskiKH}, \cite[\S 4.2]{KhanKG}; it carries  the structure of an $\E_\infty$-ring \cite{naumannKGLring}. 
Hence we can define $\KM(S) \coloneqq \Mod_{\KGL_S}$ as the $\infty$-category of modules over $\KGL_S$ in $\SH(S)$, see \cite[\S 3.3.3]{LurieHA}. The tensor product on $\SH(S)$ makes $\KM(S)$ into a closed symmetric monoidal $\infty$-category, tensored over $\SH(S)$.
\end{Rem}

\begin{Rem}(G-theory)
For $S\in\SchfpB$, let $\GGL_S \in \SH(S)$ be the representing object of G-theory, as constructed by Jin \cite[Def.~2.2.12]{JinAlgebraicG}. Hence, for every $X\in\Sm_S$ we have a canonical equivalence
    \[ \Ge(X) \simeq \map_{\SH(S)}(\Sigma^\infty_+X,\GGL_S)\]
of spectra with the G-theory spectrum $\Ge(X)$, see \cite[Cor.~2.2.16]{JinAlgebraicG}. Moreover, the object $\GGL_S \in \SH(S)$ carries an action by $\KGL_S$ \cite[2.1.10]{JinAlgebraicG} so that we obtain an object $\GGL_S\in\KM(S)$.

The object $\GGL_S$ has the following description. The presheaf $\Ge \colon \Sm_S^\op \to \Sp$ is an $\A^1$-invariant Nisnevich sheaf. The element $t\in\Z[t,t\inv]$ yields via the morphism $\K_1(\G_m) \to \Ge_1(\G_m)$ a map $t\colon \mathbb{T}\otimes \Ge \to \Ge$ in $\Fun(\Sm_S^\op,\Sp)$ where $\mathbb{T} \coloneqq (S^1,1) \otimes (\G_m,1)$, cf.\ \cite[\S 2.1]{CisinskiKH}. Hence we obtain a $\mathbb{T}$-spectrum $\underline{\Ge}$, cf.\ \cite[\S 2.16]{CisinskiKH}. By design, the object $\underline{\Ge}$ is an $\A^1$-invariant Nisnevich sheaf and hence yields an object $\GGL_S$ in $\SH(S)$. Since G-theory satisfies the Bass Fundamental Theorem, the object $\GGL_S$ represents G-theory.
\end{Rem}

\begin{Rem}
\label{Rem:redefine_GGL}
    For any $f \colon T \to S$ separated of finite type there is an invertible map $\GGL_T \to f^! \GGL_S$ by \cite[Prop.~3.1.10]{JinAlgebraicG}. Tracing through the construction, one can show that this map canonically lifts to a map $\GGL_T \to f^! \GGL_S$ in $\KM(T)$. To see this, one reduces to the case where $f$ is either an open immersion or a proper map. The first case follows from the compatibility of the contravariant functoriality of G-theory with the K-theory action. The proper case follows from the projection formula (e.g., as in \cite[Prop.~3.7]{KhanKG}), which states that the proper pushforward $\Ge(T) \to \Ge(S)$ is $\K(S)$-linear.
    To take care of the higher coherence, one can thus redefine $\GGL_S$ as $g^! \GGL_B \in \KM(S)$ where $g \colon S \to B$ is the structure map.
\end{Rem}

\begin{Def}
Define
\[\Ee_S \coloneqq \Hom_{\KM(S)}(\GGL_S,\GGL_S)_{\SH(S)},\]
 which is the underlying object in $\SH(S)$ of the internal mapping object $\Homu_{\KM(S)}(\GGL_S,\GGL_S)$, induced by the forgetful functor $U_S \colon \KM(S) \to \SH(S)$. Since $\GGL_S$ is a $\KGL_S$-module we have a morphism 
\[  \lambda_S \colon \KGL_S \to \Ee_S\]
in $\SH(S)$, induced by adjunction from $\id_{\GGL_S}$.
\end{Def}

If $B$ is regular, excellent, and finite dimensional, then for any $S \in \SchfpB$ it holds that $\lambda_S$ is invertible by Proposition~\ref{Prop:BD_duality}. In fact, in this case $\GGL_S$ is dualizing by Remark~\ref{Rem:purity_duality} since $\KGL_S$ satisfies absolute purity \cite[\S 13.6]{CisinskiDeglise}.
In this section, we will show a slightly stronger statement, namely that $\GGL_S$ is dualizing for any quasi-excellent $\Q$-scheme $S$ of finite dimension.\footnote{Let $V$ be any quasi-excellent and local $\Q$-algebra of finite dimension which is not universally catenary (such exist, e.g., by \cite[Ex.~2.6]{NishimuraLocal}). Then $\Spec(V)$ is not of finite type over any regular base scheme $B$. Indeed, if it were so then we may assume without loss of generality that $B$ is affine and $\oO_B$ is a regular local ring. Then $\oO_B$, hence also $V$, is universally catenary, contradicting the assumption.}

\begin{Rem}
\label{Rem:lambda_KGL}
    Note that $\lambda_S$ is the image of the map
    \[ \lambda'_S \colon \KGL_S \to \Homu_{\KM(S)}(\GGL_S,\GGL_S) \]
    in $\KM(S)$ under $U_S \colon \KM(S) \to \SH(S)$. Since $U_S$ is conservative, $\lambda_S$ is an equivalence if and only if $\lambda'_S$ is.
\end{Rem}

\subsection*{Reduction to global sections}
\begin{Lem}
\label{Lem:pullback_stability}
    For any $f \colon X \to S$ in $\SchfpB$, it holds $f^*\KGL_S \simeq \KGL_X$. If moreover $f$ is smooth and separated, then also $f^*\Ee_S \simeq \Ee_X$.
\end{Lem}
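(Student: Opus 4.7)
The plan is to treat the two claims in turn. For the first claim---that $f^*\KGL_S \simeq \KGL_X$ for arbitrary $f\colon X\to S$ in $\SchfpB$---this is a standard base-change property of the motivic K-theory spectrum, recorded in \cite[Thm.~2.20]{CisinskiKH} and \cite[\S 4.2]{KhanKG}: $\KGL$ is defined as the motivic localization of a presheaf that is Cartesian under pullback, so the equivalence $f^*\KGL_S \simeq \KGL_X$ is canonical and symmetric-monoidal in the structure maps.

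For the second claim, assume $f\colon X\to S$ is smooth and separated. I would combine three ingredients. \emph{Ingredient one:} smooth base change for the internal hom in $\KM$, i.e., a canonical equivalence
\[ f^*\Homu_{\KM(S)}(A,B) \simeq \Homu_{\KM(X)}(f^*A,f^*B) \]
for all $A,B\in\KM(S)$. This follows by the standard adjunction manipulation from the smooth projection formula $f_\sharp(M\otimes_{\KGL_X} f^*N) \simeq f_\sharp M\otimes_{\KGL_S} N$ in $\KM$; the latter is inherited from its counterpart in $\SH$ via the free/forgetful adjunction $F_S \dashv U_S$, using the first claim together with the symmetric monoidality of $f^*$. \emph{Ingredient two:} Remark~\ref{Rem:redefine_GGL} provides a canonical equivalence $\GGL_X \simeq f^!\GGL_S$ in $\KM(X)$. \emph{Ingredient three:} relative purity for the smooth morphism $f$ yields a $\otimes$-invertible object $\omega_f \in \KM(X)$---the Thom object of the tangent bundle $T_f$ viewed in $\KM(X)$---together with a natural equivalence $f^!(-) \simeq f^*(-) \otimes_{\KGL_X} \omega_f$.

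Combining ingredients two and three gives $\GGL_X \simeq f^*\GGL_S \otimes_{\KGL_X} \omega_f$. Since $\omega_f$ is $\otimes$-invertible, tensoring with it is an equivalence, so
\[ \Homu_{\KM(X)}(\GGL_X,\GGL_X) \simeq \Homu_{\KM(X)}(f^*\GGL_S,f^*\GGL_S). \]
Ingredient one rewrites the right-hand side as $f^*\Homu_{\KM(S)}(\GGL_S,\GGL_S)$. Applying the forgetful functor $U_X$, which commutes with $f^*$ by Notation~\ref{uppershriek_modules} (applied with $R_B = \KGL_B$), then yields $\Ee_X \simeq f^*\Ee_S$ in $\SH(X)$.

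The only mildly technical point is verifying smooth base change for the internal hom directly in the module category $\KM$, but this is entirely formal: the smooth projection formula descends from $\SH$ to $\KM$ by conservativity of $U_S$ and its compatibility with $f^*$ and $f_\sharp$, and the adjunction argument for internal homs is then standard. Purity and the identification $\GGL_X \simeq f^!\GGL_S$ are both cited inputs, so no deep obstacle is expected.
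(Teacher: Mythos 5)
Your proof is correct and follows essentially the same route as the paper. The paper combines the key identity $f^*\Homu_{\KM(S)}(-,-) \simeq \Homu_{\KM(X)}(f^!(-),f^!(-))$ (via purity and the projection formula) with Jin's $f^!\GGL_S \simeq \GGL_X$ and the commutativity of $f^*,f^!$ with the forgetful functors; you decompose this into the same three ingredients, just stated more explicitly.
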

\begin{proof}
    The statement for $\KGL_S$ is well known \cite[Prop.~3.8]{CisinskiKH}.
    The second statement follows from \cite[Prop.~3.1.11]{JinAlgebraicG} by the fact that in this case
    \[ f^*\Homu_{\KM(S)}(-,-) \simeq \Homu_{\KM(X)}(f^!(-),f^!(-)) \]
    by purity and the projections formulas, together with the fact that $f^*$ and $f^!$ commute with the forgetful functors $U \colon \KM \to \SH$ .   
\end{proof}

\begin{Rem}
    The question whether $\lambda_S$ is an equivalence is Zariski-local on $S$ by Lemma \ref{Lem:pullback_stability}. In particular, we may assume without loss of generality that $B$ is affine, and that all $B$-schemes are separated, which we do from here on. 
\end{Rem}

\begin{Not}
\label{Not:lambdaF}
    For $F \in \SH(S)$, we write
\[ \lambda_S(F) \colon \map_{\SH(S)}(F,\KGL_S) \xrightarrow{(\lambda_S)_*} \map_{\SH(S)}(F,\Ee_S) \]
for the induced map on underlying spectra.
\end{Not}

\begin{Lem}
\label{Lem:reduction_to_global_sections}
    For any smooth $S$-scheme $X$ it holds $\lambda_S(\Sigma^{\infty}_+X) \simeq \lambda_X(1_X)$.
\end{Lem}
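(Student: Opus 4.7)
The plan is to use the smooth-sharp adjunction to transfer the computation from $S$ to $X$, and then invoke Lemma~\ref{Lem:pullback_stability}. Let $p \colon X \to S$ denote the smooth structure map; by our standing assumption that all $B$-schemes are separated, $p$ is both smooth and separated, so both equivalences $p^* \KGL_S \simeq \KGL_X$ and $p^* \Ee_S \simeq \Ee_X$ of Lemma~\ref{Lem:pullback_stability} are at our disposal. Since $\Sigma^\infty_+ X \simeq p_\sharp 1_X$, the adjunction $p_\sharp \dashv p^*$ yields natural equivalences
\begin{align*}
\map_{\SH(S)}(\Sigma^\infty_+ X, \KGL_S) &\simeq \map_{\SH(X)}(1_X, p^* \KGL_S) \simeq \map_{\SH(X)}(1_X, \KGL_X), \\
\map_{\SH(S)}(\Sigma^\infty_+ X, \Ee_S) &\simeq \map_{\SH(X)}(1_X, p^* \Ee_S) \simeq \map_{\SH(X)}(1_X, \Ee_X),
\end{align*}
identifying the source and target of $\lambda_S(\Sigma^\infty_+ X)$ with those of $\lambda_X(1_X)$.

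It remains to verify that the induced map $p^* \lambda_S \colon p^* \KGL_S \to p^* \Ee_S$ is carried to $\lambda_X$ under these equivalences, i.e.\ that the square
\[
\begin{tikzcd}
p^* \KGL_S \ar[d, "p^*\lambda_S"'] \ar[r, "\sim"] & \KGL_X \ar[d, "\lambda_X"] \\
p^* \Ee_S \ar[r, "\sim"] & \Ee_X
\end{tikzcd}
\]
commutes in $\SH(X)$. By Remark~\ref{Rem:lambda_KGL}, it suffices to work with the lift $\lambda'_\bullet$ in $\KM$, where both $\lambda'_S$ and $\lambda'_X$ are uniquely characterised by adjunction as corresponding to the identities of $\GGL_S$ and $\GGL_X$ respectively. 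Since $p^*$ is symmetric monoidal and is compatible with the relevant internal hom, $p^* \lambda'_S$ corresponds under adjunction to $\id_{p^* \GGL_S}$; tracing through the purity equivalence $p^! \GGL_S \simeq \GGL_X$ from Remark~\ref{Rem:redefine_GGL} together with the construction used in the proof of Lemma~\ref{Lem:pullback_stability}, this identity transports to $\id_{\GGL_X}$, which in turn corresponds to $\lambda'_X$.

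The main obstacle is this naturality check: while intuitively clear from the 2-functoriality of the whole setup, it requires careful bookkeeping through the purity and projection-formula equivalences entering the proof of Lemma~\ref{Lem:pullback_stability}. Once the square commutes, applying $\map_{\SH(X)}(1_X, -)$ yields the claimed identification $\lambda_S(\Sigma^\infty_+ X) \simeq \lambda_X(1_X)$.
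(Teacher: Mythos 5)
Your proof follows essentially the same route as the paper: rewrite $\Sigma^\infty_+ X \simeq p_\sharp 1_X$, apply the $p_\sharp \dashv p^*$ adjunction, and invoke Lemma~\ref{Lem:pullback_stability} to identify $p^*\KGL_S$ with $\KGL_X$ and $p^*\Ee_S$ with $\Ee_X$. The only difference is that you explicitly flag and sketch the naturality check (that $p^*\lambda_S$ matches $\lambda_X$ under these identifications), which the paper's terse ``The claim follows'' leaves implicit; your treatment via the adjunction characterisation of $\lambda'_\bullet$ in $\KM$ is a reasonable way to make that step precise.
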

\begin{proof}
    By Lemma \ref{Lem:pullback_stability}, $\lambda_S(\Sigma^\infty_+ X)$ is equivalent to the morphism
    \begin{align*}
        \KH(X) \simeq \map_{\SH(S)}(\Sigma^\infty_+ X,\KGL_S) & \to \map_{\SH(S)}(f_{\sharp}1_X,\Ee_S) \\
        & \simeq \map_{\SH(X)}(1_X,f^* \Ee_S) \\
        &\simeq \map_{\SH(X)}(1_X,\Ee_X),
    \end{align*}
    where $f \colon X \to S$ is the structure map. The claim follows.
\end{proof}

\subsection*{Passage to cdh-sheaves}
For any $F \in \SH(B)$, the functor 
\begin{align*}
    \SchfpBop &\to \Sp \\
    (f \colon S \to B) &\mapsto \map_{\SH(S)}(1_S,f^*F)
\end{align*}
satisfies cdh-descent \cite[Lem.~3.4.1]{ehik--milnor-excision}. Since $f^*\KGL_B \simeq \KGL_S$ for any $f \colon S \to B$, one recovers the fact that $S \mapsto \KH(S)$ is a cdh-sheaf. 

\begin{Not}
     Consider the functor
    \begin{align*}
        \Eu_B \colon \SchfpBop &\to \Sp \\
         (f \colon S \to B) & \mapsto \map_{\KM(B)}(f_!f^!\GGL_B,\GGL_B).
    \end{align*}
\end{Not}

\begin{Rem}
\label{Rem:e_S_natural_in_S}
Since $f^!\GGL_S \simeq \GGL_X$ for any $f \colon X \to S$ in $\SchfpB$ by \cite[Prop.~3.1.11]{JinAlgebraicG}, using the exceptional adjunction yields
    \begin{align*}
    \Eu_B(S) 
    \simeq \map_{\KM(S)}(\GGL_S,\GGL_S)
    \simeq \map_{\SH(S)}(1_S,\Ee_S).
    \end{align*}
\end{Rem}

\begin{Lem}
\label{Lem:E_is_cdh_sheaf}
The presheaf $\Eu_B$ is a cdh-sheaf on $\Sch_B^{\fp}$. Moreover, for $f \colon S \to B$ of finite presentation the diagram
    \begin{center}
    \begin{tikzcd}
        \Sch_{S}^{\fp, \op} \arrow[r, "\Eu_{S}"] \arrow[d, "f \circ (-)", swap] & \Sp \\
        \Sch_{B}^{\fp, \op} \arrow[ur, "\Eu_{B}", swap]
     \end{tikzcd}
    \end{center}
    commutes.
\end{Lem}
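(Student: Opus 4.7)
\emph{Plan.} I treat the two claims separately. The commuting triangle follows formally from adjunction and Jin's identification $f^!\GGL_B \simeq \GGL_S$ in $\KM(S)$ (Remark~\ref{Rem:redefine_GGL}). For the cdh-sheaf property, I aim to show that the covariant functor
\[ \Psi_B \colon \SchfpB \to \KM(B), \qquad (f\colon S\to B) \mapsto f_!f^!\GGL_B, \]
is a cdh-cosheaf (sends cdh-squares to pushout squares), so that $\Eu_B = \map_{\KM(B)}(\Psi_B(-),\GGL_B)$ automatically becomes a cdh-sheaf.

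\emph{Compatibility.} For $f\colon S\to B$ finitely presented, $g\colon T\to S$ in $\SchfpS$, and $h = f\circ g$, the equivalences $h_! \simeq f_!g_!$ and $h^! \simeq g^!f^!$, together with Remark~\ref{Rem:redefine_GGL} and the $(f_! \dashv f^!)$-adjunction, yield
\begin{align*}
\Eu_B(T) = \map_{\KM(B)}(h_!h^!\GGL_B, \GGL_B)
&\simeq \map_{\KM(S)}(g_!g^!f^!\GGL_B, f^!\GGL_B) \\
&\simeq \map_{\KM(S)}(g_!g^!\GGL_S, \GGL_S) = \Eu_S(T),
\end{align*}
natural in $T \in \SchfpS$, giving the commuting triangle.

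\emph{cdh-descent.} Consider an abstract blow-up square in $\SchfpB$ with $p\colon Y\to X$ proper, $i\colon Z\hookrightarrow X$ closed, open complement $j\colon U\hookrightarrow X$, $W = Y\times_X Z$, pulled-back maps $i'\colon W\hookrightarrow Y$, $j'\colon Y\setminus W\hookrightarrow Y$, $p'\colon W\to Z$, and $u\coloneqq p|_{Y\setminus W}\colon Y\setminus W \xrightarrow{\sim} U$, and structure map $h_X\colon X\to B$. I want $\Psi_B$ to send this to a pushout in $\KM(B)$. Since $h_{X!}$ preserves colimits, it suffices to verify that
\[
\begin{array}{ccc}
p_* i'_* (i')^! p^! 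F & \to & p_* p^! F \\
\downarrow & & \downarrow \\
i_* i^! F & \to & F
\end{array}
\]
is a pushout in $\KM(X)$ for $F \coloneqq h_X^! \GGL_B$. The bottom row fits into the localization triangle $i_!i^!F \to F \to j_*j^*F$; the top row, under the identification $pi'=ip'$ and $p_! \simeq p_*$ (proper), is obtained by applying $p_!$ to the analogous localization on $Y$, namely $i'_!(i')^!p^!F \to p^!F \to j'_*(j')^*p^!F$. Proper base change combined with $u$ being an isomorphism yields $p_*j'_*(j')^*p^!F \simeq j_*j^*F$, matching the horizontal cofibers, and a naturality diagram chase then delivers the pushout. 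The Nisnevich case is analogous, using $f^!\simeq f^*$ for étale morphisms.

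\emph{Main obstacle.} The technical heart is the blow-up excision for $\Psi_B$, i.e.\ the comparison of the two localization triangles via proper base change combined with the naturality check that identifies the induced map of cofibers with the canonical identity of $j_*j^*F$. Alternatively, one may invoke a general cdh-cosheaf property of $f \mapsto f_!f^!G$ for $G\in\DDD(B)$ in any six-functor formalism satisfying Nisnevich and proper cdh descent, which is standard and subsumes this step.
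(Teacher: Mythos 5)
Your proposal is correct and follows the same strategy as the paper: deduce the cdh-sheaf property from the cdh-codescent of $f \mapsto f_!f^!\GGL_B$ together with the fact that $\map_{\KM(B)}(-,\GGL_B)$ sends colimits to limits, and obtain the commuting triangle from the exceptional adjunction and Jin's identification $f^!\GGL_B \simeq \GGL_S$. The only difference is that where you sketch the blow-up-excision argument for the cosheaf property by hand (and then note it is standard), the paper simply cites \cite[Cor.~2.50]{KhanVoevodsky} --- exactly the ``alternatively'' route you mention at the end.
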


\begin{proof}
    The functor $\SchfpB \to \KM(B)$ which sends $f \colon S \to B$ to $f_!f^!\GGL_B$
    satisfies cdh-codescent \cite[Cor.~2.50]{KhanVoevodsky}, and $\map_{\KM(B)}(-,\GGL_B)$ sends colimits in $\KM(B)$ to limits of spectra. Since $\Eu_B$ is the composition of these, the first claim follows. The second claim follows from Remark \ref{Rem:e_S_natural_in_S}.  
\end{proof}  

\begin{Cons}
\label{Con:underline_lambda}
We want to construct a natural transformation
\[\underline{\lambda}_B \colon \KH \to \Eu_B \]
of cdh-sheaves such that for any $X \in \SchfpB$ the map $\underline{\lambda}_B(X) \colon \KH(X) \to \Eu_B(X)$ coincides with the map 
\[ \lambda_X(1_X) \colon \Hom_{\SH(X)}(1_X,\KGL_X)_\Sp \xrightarrow{(\lambda_X)_*} \Hom_{\SH(X)}(1_X,\Ee_X)_\Sp \]
from Notation \ref{Not:lambdaF}.

\vspace{6pt}\noindent\textbf{Step 1.}
For $X \in \Sm_B$ we write
\[ \underline{\lambda}_B^\sm(X) \colon \K_{\geq 0}(X) \to \KH(X) \xrightarrow{\lambda_X(1_X)} \Eu_B(X)\]
which gives us a natural transformation $\underline{\lambda}_B^\sm \colon \K_{\geq 0} \to \Eu_B$ of presheaves on $\Sm_B$.

\vspace{6pt}\noindent\textbf{Step 2.}
Write $\Aff_B^{\fp}$ for the category of affine $B$-schemes of finite presentation and put $\Aff_B^{\smt} \coloneqq \Sm_B \cap \Aff_B^{\fp}$. Consider the inclusion functor $j \colon \Aff_B^{\smt} \inj \Aff_B^{\fp}$, and the induced adjunction
\[ j_! \dashv j^* \colon \Fun(\Aff_B^{\smt, \op},\Sp) \rightleftarrows \Fun(\Aff_B^{\fp, \op},\Sp), \]
where $j_!$ is given by left Kan extension.
Since connective algebraic K-theory on $\Aff_B^{\fp}$ is left Kan extended from its restriction to $\Aff_B^{\smt}$ \cite[Ex.~A.0.6(1)]{magic5-modules-cobordism}, we have that $j_!(\K_{\geq 0}) \simeq \K_{\geq 0}$. The map $\underline{\lambda}_B^\sm \colon j^*\K_{\geq 0} \to j^*\Eu_B$ therefore corresponds to a map $\underline{\lambda}_B^{\PPP} \colon \K_{\geq 0} \to \Eu_B$ in $\Fun(\Aff_B^{\fp,\op},\Sp)$ by adjunction. Since $\Eu_B$ is a cdh-sheaf, we obtain an induced map 
\[ \underline{\lambda}_B \colon \KH \simeq \Lcdh\K_{\geq 0} \to \Eu_B \]
in $\Sh_{\cdh}(\Sch_B^{\fp})$.

\vspace{6pt}\noindent\textbf{Step 3.} 
Let $f \colon S \to B$ be finitely presented, and $r \colon \Sch_S^{\fp} \to \Sch_B^{\fp}$ the functor induced by composition with $f$. We claim that the whiskered morphism
\[ \underline{\lambda}_B \cdot r \colon \KH \to \Eu_S \]
of cdh-sheaves on $\Sch_S^{\fp}$ obtained by Lemma~\ref{Lem:E_is_cdh_sheaf} is canonically equivalent to the morphism $\underline{\lambda}_S$. Reversing the previous steps, it suffices to check this after precomposing with $\K_{\geq 0} \to \KH$ and after restricting to $\Sm_S$, in which case it follows by  construction from Lemma~\ref{Lem:reduction_to_global_sections}.

\vspace{6pt}\noindent\textbf{Step 4.} 
It remains to show that $\underline{\lambda}_B$ is pointwise the map $\lambda_X(1_X)$. For $X$ smooth and affine over $B$ this follows from construction. For general $X$ this follows from the previous step.
\end{Cons}

\begin{Prop}
\label{Prop:reduction_to_cdh}
    Suppose that $\underline{\lambda}_B$ is an equivalence. Then for any $S \in \SchfpB$ the map $\lambda_S$ is an equivalence.
\end{Prop}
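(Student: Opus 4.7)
The plan is to test $\lambda_S \colon \KGL_S \to \Ee_S$ on a set of compact generators of $\SH(S)$. Since $S$ is Noetherian, such a set is given by the objects $\Sigma^\infty_+ X(n)$ for $X \in \Sm_S$ (automatically of finite presentation) and $n \in \Z$, so it suffices to show that
\[ \map_{\SH(S)}(\Sigma^\infty_+ X(n), \lambda_S) \]
is an equivalence of spectra for every such $X$ and $n$.

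First, I would reduce to the case $n = 0$ by Bott periodicity. By Remark~\ref{Rem:lambda_KGL}, the map $\lambda_S$ is the image under the conservative forgetful functor $U_S \colon \KM(S) \to \SH(S)$ of a morphism $\lambda'_S$ in $\KM(S)$; in particular, both $\KGL_S$ and $\Ee_S$ are $\KGL_S$-modules and $\lambda_S$ respects these structures. Bott periodicity makes the Tate twist act invertibly—up to a shift—on any $\KGL_S$-module, so $\map_{\SH(S)}(\Sigma^\infty_+ X(n), \lambda_S)$ is, up to a shift, the same map as $\map_{\SH(S)}(\Sigma^\infty_+ X, \lambda_S)$. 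This reduces the problem to showing that $\lambda_S(\Sigma^\infty_+ X)$ is an equivalence for every smooth $X/S$.

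Next, I would apply Lemma~\ref{Lem:reduction_to_global_sections} to identify $\lambda_S(\Sigma^\infty_+ X)$ with the global-sections map $\lambda_X(1_X)$. Step 4 of Construction~\ref{Con:underline_lambda} then identifies $\lambda_X(1_X)$ with $\underline{\lambda}_B(X)$ for any $X \in \SchfpB$. Since a smooth $X$ over $S \in \SchfpB$ is itself finitely presented over $B$, hence lies in $\SchfpB$, the hypothesis that $\underline{\lambda}_B \colon \KH \to \Eu_B$ is an equivalence of cdh-sheaves—equivalently, a pointwise equivalence of spectra on $\SchfpB$—forces $\lambda_X(1_X)$ to be an equivalence, completing the argument.

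There is no substantial obstacle: the proof just threads together compact generation of $\SH(S)$, Bott periodicity, Lemma~\ref{Lem:reduction_to_global_sections}, and the pointwise identification from Construction~\ref{Con:underline_lambda}. The one step to verify with some care is the Bott periodicity reduction, for which it is essential that $\Ee_S$ inherits its $\KGL_S$-module structure from $\Homu_{\KM(S)}(\GGL_S, \GGL_S) \in \KM(S)$ via the forgetful functor.
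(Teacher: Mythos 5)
Your proof is correct and follows the same route as the paper: reduce to generators of $\SH(S)$, identify $\lambda_S(\Sigma^\infty_+X)$ with $\lambda_X(1_X)$ via Lemma~\ref{Lem:reduction_to_global_sections}, and then invoke Step~4 of Construction~\ref{Con:underline_lambda} together with the hypothesis that $\underline{\lambda}_B$ is an equivalence. You are in fact slightly more careful than the paper's one-line reduction: the paper only mentions generators of the form $\Sigma^\infty_+X[n]$, while you correctly observe that Tate twists must also be accounted for and dispatch them via Bott periodicity, using that $\lambda_S$ underlies a morphism of $\KGL_S$-modules.
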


\begin{proof}  
    Since $\SH(S)$ is generated under colimits by objects of shape $\Sigma^\infty_+X[n]$ for $X$ smooth over $S$ and $n \in \Z$, it suffices to show that $\lambda_S(\Sigma^\infty_+X)$ is an equivalence for all $X \in \Sm_S$. Since $\lambda_S(\Sigma^\infty_+X) = \lambda_X(1_X)$ by Lemma \ref{Lem:reduction_to_global_sections}, the claim follows since $\lambda_X(1_X)$ is $\underline{\lambda}_B(X)$ by construction.
\end{proof}

\subsection*{Reduction to Henselian valuation rings}
Recall that the points for the cdh-topology on $\SchfpB$ are all maps of the form $x \colon \Spec(V) \to B$ with $V$  a Henselian valuation ring \cite[Thm.~2.6]{GabberPoints}. For a cdh-sheaf $F \colon \SchfpBop \to \DDD$ with values in a presentable $\infty$-category $\DDD$, the \emph{stalk} of $F$ at such a point $x\colon \Spec(V) \to B$ is the colimit
\[ F_x \coloneqq \colim_{X \in\mathrm{Nbh}(x)} F(X), \]
where $\mathrm{Nbh}(x)$ is the category of all factorizations $\Spec(V) \to X \to B$ of $x$ such that $X \to B$ is finitely presented.

\begin{Prop}
\label{Prop:reduction_to_Henselian_valuation_ring}
 Suppose that $B$ is of finite dimension. If for every Henselian valuation ring $V$ and every morphism $x\colon\Spec(V)\to B$, the induced map on stalks   
    \[ \lambda_x \colon \KH_x \to (\Eu_B)_x \]
is an equivalence, then $\lambda_S$ is an equivalence for all $S \in \SchfpB$.
\end{Prop}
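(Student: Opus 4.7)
The plan is to combine the reduction from Proposition~\ref{Prop:reduction_to_cdh} with the fact that cdh-sheaves of spectra on a finite-dimensional base are detected by their stalks at Henselian valuation rings. By Proposition~\ref{Prop:reduction_to_cdh}, it suffices to prove that the natural transformation $\underline{\lambda}_B \colon \KH \to \Eu_B$ constructed in Construction~\ref{Con:underline_lambda} is an equivalence of cdh-sheaves of spectra on $\Sch_B^{\fp}$. Both source and target are cdh-sheaves: for $\KH$ this is classical, and for $\Eu_B$ this is Lemma~\ref{Lem:E_is_cdh_sheaf}.

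Next, I invoke Gabber's theorem \cite[Thm.~2.6]{GabberPoints}, as already recalled in the excerpt, which says that the cdh-topos on $\SchfpB$ has enough points and that every such point is of the form $x \colon \Spec(V) \to B$ with $V$ a Henselian valuation ring. For a morphism of sheaves of spectra on a site with enough points, being an equivalence can be checked on stalks provided the ambient $\infty$-topos is hypercomplete (equivalently, Postnikov towers converge). The hypothesis that $B$ is finite-dimensional is exactly what guarantees this: under this assumption the cdh-topos has finite cohomological dimension in the sense relevant to hypercompleteness. Granted this, the assumption that each $\lambda_x$ is an equivalence implies that $\underline{\lambda}_B$ induces an equivalence on every stalk, hence is itself an equivalence of cdh-sheaves.

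Combining this with Remark~\ref{Rem:lambda_KGL}, which allows us to pass from $\lambda_S$ in $\SH(S)$ to its lift $\lambda'_S$ in $\KM(S)$ using conservativity of the forgetful functor, completes the proof of the proposition.

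The main obstacle is the step invoking hypercompleteness of the cdh-topos and the fact that Henselian valuation ring stalks detect equivalences of sheaves of \emph{spectra} (not merely of discrete sheaves or bounded-above complexes). This is the one place in the argument where the finite-dimensionality of $B$ is essential; without it one would only detect equivalences on hypercomplete truncations, and a priori $\KH$ and $\Eu_B$ need not be hypercomplete. Everything else is a direct application of the material set up in the preceding subsection.
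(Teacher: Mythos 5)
Your proof is correct and follows essentially the same route as the paper: reduce via Proposition~\ref{Prop:reduction_to_cdh} to showing $\underline{\lambda}_B$ is an equivalence of cdh-sheaves, then use Gabber's description of cdh-points together with hypercompleteness of the cdh-topos (which the paper deduces from finite dimensionality via \cite[Thm.~B]{ehik--milnor-excision}) to check the equivalence on stalks. The closing appeal to Remark~\ref{Rem:lambda_KGL} is superfluous here, since the proposition's conclusion concerns $\lambda_S$ in $\SH(S)$ directly and the passage to $\lambda'_S$ in $\KM(S)$ is only needed later in Corollary~\ref{Cor:duality_char0}, but this does not affect correctness.
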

\begin{proof}
It suffices to show that $\underline{\lambda}_B$ is an equivalence by Proposition \ref{Prop:reduction_to_cdh}.

Since $S$ is Noetherian, the Krull-dimension equals the valuative dimension \cite[Prop.~2.3.2]{ehik--milnor-excision}, and hence the topos of cdh-sheaves on $\SchfpB$ is hypercomplete \cite[Thm.~B]{ehik--milnor-excision}. Hence $\underline{\lambda}_B$ is an equivalence if and only if it induces an equivalence on the homotopy groups if and only if it induces an equivalence on all stalks (as there are enough points) \cite[Thm.~0.2, Thm.~2.6]{GabberPoints}.
\end{proof}

\subsection*{Conclusion of proof}
In this subsection we show $\GGL_S$ is dualizing, assuming resolution of singularities on $S$.

\begin{Thm}
\label{Thm:lambda-equivalence-characteristic-zero}
Suppose that $S$ is a quasi-excellent $\Q$-scheme of finite dimension.\footnote{In fact, the statement is true for any scheme $S$ of finite dimension such that any integral finite type $S$-scheme admits a desingularization.}
Then the canonical map 
\[ \lambda_S \colon \KGL_S \to \Ee_S \]
in $\SH(S)$ is an equivalence.
\end{Thm}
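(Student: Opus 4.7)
The plan is to combine Proposition~\ref{Prop:reduction_to_Henselian_valuation_ring} with resolution of singularities. Since $S$ is quasi-excellent and hence Noetherian, one may take the base scheme $B$ of the preceding discussion to be $S$ itself; the cited proposition then reduces the claim to showing, for every Henselian valuation ring $V$ and every morphism $x \colon \Spec(V) \to S$, that the induced map on cdh-stalks
\[ \lambda_x \colon \KH_x \longrightarrow (\Eu_S)_x \]
is an equivalence of spectra. Both stalks are filtered colimits indexed by the cofiltered category $\Nbh(x)$ of finitely presented factorizations $\Spec(V) \to X \to S$.

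The first step I would carry out is to show that the full subcategory of regular neighborhoods is cofinal in $\Nbh(x)$. Given any $X \in \Nbh(x)$, first replace it by the scheme-theoretic image of $\Spec(V) \to X$ equipped with its reduced structure; this is a closed integral subscheme of $X$, still finitely presented over $S$ since $S$ is Noetherian, and it still factors $x$. Since $S$ is a quasi-excellent $\Q$-scheme, resolution of singularities applied to this integral $S$-scheme of finite type yields a projective birational morphism $\pi \colon \tilde{X} \to X$ with $\tilde{X}$ regular. The valuative criterion of properness applied to the valuation ring $V$ then lifts $\Spec(V) \to X$ uniquely through $\pi$, exhibiting a morphism $\tilde{X} \to X$ in $\Nbh(x)$ with $\tilde{X}$ regular. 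This cofinality allows us to compute $\KH_x$ and $(\Eu_S)_x$ as colimits indexed only over the regular members of $\Nbh(x)$.

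The second step is the observation that on a regular Noetherian scheme $X$, the canonical comparison map $\KGL_X \to \GGL_X$ is an equivalence in $\KM(X)$: for any $Y \in \Sm_X$, which is also regular Noetherian, one has $\K(Y) \simeq \Ge(Y)$ by Quillen's devissage and $\K(Y) \simeq \KH(Y)$ by Weibel, so both objects represent the same presheaf of spectra on $\Sm_X$. Under the resulting identification $\GGL_X \simeq \KGL_X = 1_{\KM(X)}$, the map $\lambda_X \colon \KGL_X \to \Ee_X$ becomes tautologically the canonical equivalence $\KGL_X \xrightarrow{\sim} \Homu_{\KM(X)}(1,1)$. Combining the two steps, $\lambda_x$ is a filtered colimit of equivalences, hence an equivalence. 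The main obstacle I expect lies in the categorical bookkeeping of the cofinality argument: one must ensure that scheme-theoretic images and resolutions fit together to genuinely produce morphisms in $\Nbh(x)$, and that this suffices for $\infty$-categorical cofinality in the precise sense needed to restrict the colimits. Beyond that, the key geometric input is resolution of singularities, and the remainder is essentially formal.
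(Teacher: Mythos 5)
Your proposal is correct and follows essentially the same route as the paper's proof: reduce to cdh-stalks at Henselian valuation rings via Proposition~\ref{Prop:reduction_to_Henselian_valuation_ring}, use resolution of singularities together with the valuative criterion of properness to pass to regular neighborhoods, and conclude from $\KGL\simeq\GGL$ on regular schemes. The paper takes care of the cofinality bookkeeping you flag by exhibiting $V$ as a filtered colimit of regular finite-type subalgebras and checking that the resulting inclusion of neighborhood categories is initial via Quillen's Theorem~A (and, a small terminological point: $\K\simeq\Ge$ for regular Noetherian schemes is the resolution theorem, i.e.\ finite resolutions of coherent sheaves by vector bundles, rather than devissage).
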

\begin{proof}
According to Proposition~\ref{Prop:reduction_to_Henselian_valuation_ring} it suffices to check the desired assertion on stalks of $\underline{\lambda}_S$ in the cdh-topology. 
Thus let $x\colon \Spec(V) \to S$ be a morphism for a Henselian valuation ring $V$. 

\vspace{6pt}\noindent\textbf{Claim:} $V$ is isomorphic to the filtered colimit of its subrings that are regular and of finite type over $S$.

\vspace{6pt} By Zariski descent we may assume without loss of generality that $S$ is affine, say $S = \Spec(R)$.
Let $W \subset V$ be a sub-$R$-algebra of finite type so that $\Spec(W)$ is integral. By resolution of singularities in characteristic zero, there exists a proper and birational morphism $\pi\colon \tilde{X}\to\Spec(W)$ with $\tilde{X}$ regular which is an isomorphism over the regular locus of $\Spec(W)$ \cite[Thm.~1.1]{temkin-desingularization-char-zero}. 
 Since $\pi$ is proper and surjective, the valuative criterion of properness yields a lift $\tilde{x}\colon \Spec(V)\to\tilde{X}$. Let $\Spec(\tilde{W})$ be an affine open neighbourhood of $\tilde{X}$ containing the image of $\tilde{x}$. Note that $\tilde{W}$ is integral. We have morphisms of $R$-algebras $W\subset\tilde{W}\to V$ with $\tilde{W}$ being regular. On fraction fields this yields $\Frac(W) = \Frac(\tilde{W}) \subset \Frac(V)$, hence $\tilde{W} \to V$ is injective as well. By cofinality, the Claim follows.

Let $I$ be the full subcategory of the comma category $\Spec(V)/\Sch_S^{\fp}$ consisting of spectra of all $R$-subalgebras of $V$ which are regular. By the claim it holds that $I$ is cofiltered and $\Spec(V) \simeq \lim_{S_\alpha \in I} S_\alpha$. It follows that the inclusion $F \colon I \to \Spec(V)/\Sch_S^{\fp}$ is initial. Indeed, let $Y \in \Spec(V)/\Sch_S^{\fp}$ be given. Then
\[ \Hom_S(\Spec(V),Y) \simeq \colim_{S_\alpha \in I} \Hom_S(S_\alpha,Y), \]
from which we conclude that the comma category $F/Y$ is non-empty and filtered, hence contractible. The claim follows by Quillen's Theorem A.

The stalk at $x \colon \Spec(V) \to S$ of any cdh-sheaf $F$ on $\Sch_S^{\fp}$ can thus be computed as 
\[ F_x \simeq \colim_{S_\alpha \in I} F(S_\alpha). \]
It thus suffices to show that
$ \underline{\lambda}_S(S_\alpha) \simeq \lambda_{S_\alpha}(S_\alpha) $
is an equivalence for any $S_\alpha \in I$. And indeed this is so, since $\KGL_T \simeq \GGL_T$ for regular $T$ \cite[Lem.~3.3.3]{JinAlgebraicG}.
\end{proof}

\begin{Lem}
\label{Lem:GGL-constrcutible}
Let $S$ be a quasi-excellent $\Q$-scheme of finite dimension. Then $\GGL_S$ is constructible in $\KM(S)$.
\end{Lem}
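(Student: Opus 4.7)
The plan is to induct on $\dim S$ using resolution of singularities and the proper-cdh codescent for the functor $(f\colon X\to S) \mapsto f_!f^!\GGL_S$ already exploited in Lemma~\ref{Lem:E_is_cdh_sheaf}. Since the closed immersion $S_{\red}\hookrightarrow S$ induces an equivalence $\SH(S_{\red})\simeq\SH(S)$ sending $\GGL_{S_{\red}}$ to $\GGL_S$, I may reduce to the case that $S$ is reduced. The base case $\dim S=0$ is then immediate: such an $S$ is a finite disjoint union of spectra of fields, hence regular, so $\GGL_S\simeq\KGL_S=1_S$ is trivially constructible in $\KM(S)$.

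For the inductive step, I would let $i\colon Z\hookrightarrow S$ denote the reduced closed complement of the (dense open) regular locus of $S$, and invoke Temkin's resolution of singularities \cite[Thm.~1.1]{temkin-desingularization-char-zero} to obtain a proper birational morphism $\pi\colon\tilde S\to S$ with $\tilde S$ regular and $\pi$ an isomorphism over $S\setminus Z$. Writing $i_E\colon E\hookrightarrow\tilde S$ for the reduced preimage of $Z$ and $\pi_E\colon E\to Z$ for the induced map, the resulting abstract blow-up square
\[
\begin{tikzcd}
E \arrow[r, "i_E"] \arrow[d, "\pi_E"'] & \tilde S \arrow[d, "\pi"] \\
Z \arrow[r, "i"'] & S
\end{tikzcd}
\]
combined with cdh-codescent \cite[Cor.~2.50]{KhanVoevodsky} and the identifications $f^!\GGL_S\simeq\GGL_X$ from Remark~\ref{Rem:redefine_GGL} yields a pushout square in $\KM(S)$ with corners $(i\circ\pi_E)_!\GGL_E$, $\pi_!\GGL_{\tilde S}$, $i_*\GGL_Z$, and $\GGL_S$.

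Since $\tilde S$ is regular, $\GGL_{\tilde S}\simeq\KGL_{\tilde S}=1_{\tilde S}$ is constructible in $\KM(\tilde S)$, and hence $\pi_!\GGL_{\tilde S}$ is constructible in $\KM(S)$ by preservation of constructibility under finitely presented pushforward \cite[Thm.~2.60]{KhanVoevodsky}. Moreover, $Z$ and $E$ are quasi-excellent $\Q$-schemes of dimension strictly less than $\dim S$, so by the inductive hypothesis $\GGL_Z$ and $\GGL_E$ are constructible, and the same preservation theorem shows that $i_*\GGL_Z$ and $(i\circ\pi_E)_!\GGL_E$ are constructible in $\KM(S)$. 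Since constructible objects form a stable (in particular pushout-closed) subcategory, $\GGL_S$ is constructible as well.

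I do not anticipate a serious obstacle: the two crucial inputs, cdh-codescent of $f_!f^!\GGL_S$ and the identity $f^!\GGL\simeq\GGL$, are both already at our disposal, and Temkin's theorem is tailor-made for the ambient hypotheses. The only genuine check is that $Z$, $E$, and $\tilde S$ inherit quasi-excellence, the $\Q$-scheme property, and finite-dimensionality from $S$, which is standard.
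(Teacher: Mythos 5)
Your proof follows essentially the same strategy as the paper's: induction on dimension, using the abstract blow-up (proper excision / cdh-codescent) square coming from a desingularization together with $\GGL\simeq\KGL$ on regular schemes. The only place you deviate is the inductive step: the paper first reduces to the \emph{integral} case by peeling off irreducible components one at a time with proper excision (an auxiliary induction on the number of components), and only then invokes Temkin; you instead apply Temkin directly to a reduced but possibly reducible $S$. Since \cite[Thm.~1.1]{temkin-desingularization-char-zero} is stated for integral schemes, you should either apply it componentwise and take $\tilde{S}$ to be the disjoint union of the resolutions of the irreducible components (which still yields an abstract blow-up square over the dense open regular locus), or insert the paper's reduction-to-integral step. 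This is an easy repair, and the rest of the argument --- in particular the implicit facts that the singular locus of a reduced noetherian scheme has strictly smaller dimension, and that $E$ is a proper closed subset of the regular, equidimensional $\tilde{S}$ --- is correct; your route is essentially a streamlined version of the paper's, trading the extra induction on components for a slightly stronger (but true) form of resolution.
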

\begin{proof}
    We induct on the dimension of $S$. If $\dim(S)=0$ then $S$ is a disjoint union of finitely many schemes of the form $\Spec(A)$ for $A$ an Artinian local ring of dimension zero \cite[\href{https://stacks.math.columbia.edu/tag/0AAX}{Tag 0AAX}]{stacks-project}. By Zariski descent we assume $S = \Spec(A)$, and by nilinvariance that $S$ is reduced. In this case $A$ is a field.
    
    Now assume $\dim (S) > 0$. Let $i \colon Z \to S$ be any irreducible component of $S$ and let $f \colon X \to S$ be the union of the other (finitely many) irreducible components; let $g\colon E\coloneqq Z\times_SX \to S$ be the obvious morphism. By proper excision \cite[Thm.~2.47]{KhanVoevodsky} we have a Cartesian square
    \begin{center}
        \begin{tikzcd}
            \GGL_S \arrow[r] \arrow[d] & i_!\GGL_Z \arrow[d] \\
            f_!\GGL_X \arrow[r] & g_!\GGL_E
        \end{tikzcd}
    \end{center}
    in $\KM(S)$. Note that exceptional pushforward along finite type maps preserves constructible objects  \cite[Thm.~2.60]{KhanVoevodsky}. Hence, by induction on the number of irreducible components and induction on the dimension, we see that $\GGL_S$ is constructible, if the integral case is known. Thus we may assume $S$ to be integral.
    
    By resolution of singularities there exists a blow-up $f' \colon X' \to S$ with $X'$ being regular with center $i' \colon Z' \to S$ such that $\dim(Z') < \dim(S)$ \cite[Thm.~1.1]{temkin-desingularization-char-zero}. Let $g'\colon E'\coloneqq Z' \times_S X' \to S$ be the obvious morphism; note that $\dim(E') < \dim(X') = \dim(S)$.
    Since $X'$ is regular, it holds that $\GGL_{X'}\simeq \KGL_{X'}$ is constructible in $\KM(X')$. By induction on the dimension, we may assume also $\GGL_{Z'}$ and $\GGL_{E'}$ to be constructible. Hence we conclude, by again using the proper excision square above, that $\GGL_S$ is constructible.
\end{proof}

\begin{Cor}
\label{Cor:duality_char0}
Let $S$ be a quasi-excellent $\Q$-scheme of finite dimension. Then taking weak duals gives a self-inverse anti-equivalence 
    \[  \KM_c(S)^\op \to \KM_c(S)\colon M \mapsto M^\vee \coloneqq \Homu_{\KM(S)}(M,\GGL_S)  \]
on the category of constructible objects $\KM_c(S)$.
\end{Cor}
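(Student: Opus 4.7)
The plan is to apply the abstract duality result Corollary~\ref{Cor:abstract_duality} with base $B \coloneqq S$ and coefficient system $\DDD \coloneqq \KM$, choosing $I_S \coloneqq \GGL_S$. The task then reduces to verifying that $\GGL_T$ is dualizing in $\KM(T)$ for every $T \in \SchfpS$.

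First I would check that the relevant finiteness hypotheses descend to every $T \in \SchfpS$. Since $S$ is quasi-excellent (hence in particular Noetherian), any such $T$ is itself Noetherian, a $\Q$-scheme (because $S$ is), of finite Krull dimension (bounded in terms of $\dim(S)$ and the relative dimension of $T/S$), and quasi-excellent since quasi-excellence is stable under morphisms of finite type. Thus $T$ falls under the hypotheses of both Theorem~\ref{Thm:lambda-equivalence-characteristic-zero} and Lemma~\ref{Lem:GGL-constrcutible}.

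Next, Theorem~\ref{Thm:lambda-equivalence-characteristic-zero} applied to $T$ shows that $\lambda_T\colon \KGL_T \to \Ee_T$ is an equivalence in $\SH(T)$. By Remark~\ref{Rem:lambda_KGL}, the conservativity of the forgetful functor $U_T\colon \KM(T) \to \SH(T)$ then promotes this to the statement that the canonical map $\KGL_T \to \Homu_{\KM(T)}(\GGL_T,\GGL_T)$ is an equivalence in $\KM(T)$. This is exactly the assertion that $\GGL_T$ is weakly dualizing in the sense of Section~1 (the unit of $\KM(T)$ being $\KGL_T$). Lemma~\ref{Lem:GGL-constrcutible} applied to $T$ further shows that $\GGL_T$ is constructible, whence $\GGL_T$ is dualizing.

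With $\GGL_T$ dualizing for every $T \in \SchfpS$, Corollary~\ref{Cor:abstract_duality} (applied with $B = S$) produces the desired self-inverse anti-equivalence $M \mapsto \Homu_{\KM(S)}(M,\GGL_S)$ on $\KM_c(S)$. There is no real obstacle here: the substance of the argument lies entirely in Theorem~\ref{Thm:lambda-equivalence-characteristic-zero} and Lemma~\ref{Lem:GGL-constrcutible}, and the corollary is a formal consequence once the preservation of hypotheses under finitely presented base change is noted.
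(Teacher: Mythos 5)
Your proposal is correct and follows essentially the same route as the paper: apply Corollary~\ref{Cor:abstract_duality} with $\DDD = \KM$ and $I_S = \GGL_S$, citing Theorem~\ref{Thm:lambda-equivalence-characteristic-zero}, Remark~\ref{Rem:lambda_KGL}, and Lemma~\ref{Lem:GGL-constrcutible} for the dualizing property. The only thing you have added is the (correct and worth stating) observation that the hypotheses of those two results are inherited by every $T \in \SchfpS$, which the paper leaves implicit; you implicitly also use $f^!\GGL_S \simeq \GGL_T$ so that $I_T$ in the sense of the abstract framework agrees with $\GGL_T$, which is built in via Remark~\ref{Rem:redefine_GGL}.
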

\begin{proof}
    By Theorem~\ref{Thm:lambda-equivalence-characteristic-zero}, Remark~\ref{Rem:lambda_KGL} and Corollary~\ref{Cor:abstract_duality}, this follows from Lemma~\ref{Lem:GGL-constrcutible}.
\end{proof}


\appendix
\section{Coefficient systems}
\label{Sec:coefficient_systems}
Here we review some of the basic theory of coefficient systems. Since there are different presentations available in the literature with various degrees of generality, this appendix is mainly intended to fix notation and to explain how to pass to some of the other approaches so that we can  use results formulated therein. 

Throughout, let $S$ be a scheme.  Consider a full subcategory $\CCC \subset \Sch_S$ such that the following holds:
\begin{enumerate}
    \item For any closed immersion $Z \to X$, if $X \in \CCC$ then $Z_\red \in \CCC$.
    \item The category $\CCC$ has finite coproducts and fiber products.
    \item For any smooth morphism $X \to Y$ in $\Sch_S$, if $Y \in \CCC$ then $X \in \CCC$.
\end{enumerate}

\begin{Def}
\label{Def:coefficient_system}
    A \emph{coefficient system on $\CCC$} is a functor
    \[ \DDD \colon \CCC^\op \to \PrLotimesst \]
    where for $f \colon X \to Y$ we write $f^*$ for the colimit-preserving, symmetric monoidal functor $\DDD(f) \colon \DDD(Y) \to \DDD(X)$, and $f_*$ for its right adjoint. We require the following axioms:
    \begin{itemize}[align=left]
        \item[\textbf{Sharp.}] For any smooth morphism $p \colon X \to Y$ in $\CCC$, the functor $p^*$ admits a left adjoint $p_\sharp$ which satisfies:
        \begin{itemize}[align=left]
            \item [\textit{Smooth base change.}] For any Cartesian square
        \begin{center}
            \begin{tikzcd}
                X' \arrow[r, "g"] \arrow[d, "q"] & X \arrow[d, "p"] \\
                Y' \arrow[r, "f"] & Y
            \end{tikzcd}
        \end{center}
        in $\CCC$, the Beck--Chevalley transformation $q_\sharp g^* \to f^* p_\sharp$ is an equivalence.
        \item [\textit{Projection formula.}] The functor $p_\sharp$ is $\DDD(Y)$-linear. In particular, the canonical map $p_\sharp(p^*A \otimes B) \to p^*A \otimes p_\sharp(B)$ is an equivalence for any $A \in \DDD(Y)$ and $B \in \DDD(X)$. 
        \end{itemize}        
        \item [\textbf{Localization.}] It holds $\DDD(\varnothing) = 0$, and for any closed immersion $i \colon Z\to X$ in $\CCC$ with complementary open immersion $j \colon U \to X$, the sequence
        \[ \DDD(Z) \xrightarrow{i_*} \DDD(X) \xrightarrow{j^*} \DDD(U) \]
        is exact.
        \item [\textbf{Motivic conditions.}] 
        For any $X \in \CCC$ and vector bundle $p \colon E \to X$ with zero section $s \colon X \to E$, it holds:
        \begin{itemize}[align=left]
            \item [\textit{Homotopy invariance.}] $p^*$ is fully faithful.
            \item [\textit{Thom stability.}] $p_\sharp s_*$ is invertible.
        \end{itemize}    
    \end{itemize}
\end{Def}

Let $\CCC_\red$ be the full subcategory of $\Sch_S$ spanned by objects of the form $X_\red$ for $X \in \CCC$. Note that $\CCC_\red$ is a full subcategory of $\CCC$, and the inclusion $\CCC_\red \to \CCC$ admits a right adjoint $(-)_\red$. Restricting a coefficient system on $\CCC$ along $\CCC_\red \to \CCC$ induces a coefficient system on $\CCC_\red$, and any coefficient system on $\CCC_\red$ extends uniquely to a coefficient system on $\CCC$. The latter follows from nilinvariance, which is a consequence of localization.

To apply results from \cite{KhanVoevodsky}, let $\dSch_S$ be the $\infty$-category of \emph{derived} schemes over $S$, and let $\CCC'$ be smallest full subcategory of $\dSch_S$ which contains $\CCC$, is closed under fiber products in $\dSch_S$, and such that for any closed immersion $Z \to X$ with $X \in \CCC'$ it holds $Z \in \CCC'$. Note that taking the underlying reduced classical scheme gives a right adjoint to the inclusion $\CCC_\red \subset \CCC'$. With a derived analogue of above axioms for $\CCC'$ we can likewise pass back and forth between coefficient systems on $\CCC$ and coefficient systems on $\CCC'$. Moreover, the following holds:

\begin{Lem}\label{Lem:CoefsysStar}
    Let $\DDD$ be a coefficient system on $\CCC$ with extension $\DDD'$ to $\CCC'$. Then $\DDD'$ is a $(*,\sharp,\otimes)$-formalism which satisfies the Voevodsky conditions in the sense of \cite[Def.~2.2, Def.~2.4]{KhanVoevodsky}.
\end{Lem}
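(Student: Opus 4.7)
The plan is to verify the axioms of a $(*,\sharp,\otimes)$-formalism satisfying Voevodsky's conditions, in the sense of \cite[Def.~2.2, Def.~2.4]{KhanVoevodsky}, one-by-one, using the corresponding axioms from Definition~\ref{Def:coefficient_system}. Since the two setups are close in spirit but formulated slightly differently (classical vs.\ derived schemes, and with possibly different structural packagings), most of the proof is bookkeeping; the only genuine content is the extension from $\CCC$ to $\CCC'$ and the verification that the smooth-base-change/projection-formula axioms are preserved under this extension.

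First, I would recall the construction of the extension $\DDD'$ on $\CCC'$. By the discussion immediately preceding the lemma, restriction of $\DDD$ to $\CCC_\red$ followed by the right adjoint $(-)_\red \colon \CCC' \to \CCC_\red$ gives a functor $\DDD' \colon (\CCC')^\op \to \PrLotimesst$, and I would check that $\DDD'$ is indeed a coefficient system on $\CCC'$ in the derived analogue of Definition~\ref{Def:coefficient_system}. The key points here are: nilinvariance comes from localization, so reducing along $(-)_\red$ does not lose information; smoothness, closed immersions and fiber products are preserved in the passage to derived schemes in the expected way; and the various Beck--Chevalley transformations are induced pointwise.

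Next, I would match the axioms. The underlying functor $\DDD' \colon (\CCC')^\op \to \PrLotimesst$ supplies $f^*$ (symmetric monoidal and colimit-preserving) and $f_*$ (its right adjoint) for all morphisms in $\CCC'$. The \textbf{Sharp} axiom supplies $p_\sharp$ for smooth $p$, together with smooth base change and the smooth projection formula, which is exactly what is required on the $\sharp$-side of a $(*,\sharp,\otimes)$-formalism. The symmetric monoidal structure and its compatibility with $f^*$ is built into the target $\PrLotimesst$. For Voevodsky's conditions, I would observe that \textbf{Localization} gives the required recollement along any closed/open complementary pair, and the \textbf{Motivic conditions} (homotopy invariance for vector bundles and Thom stability for their zero sections) are literally the conditions asked for in \cite[Def.~2.4]{KhanVoevodsky}; both in fact only need to be tested for the trivial line bundle on a point-like base, but I would just cite the vector bundle formulation directly.

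The main obstacle, such as it is, lies in checking that the derived extension $\DDD'$ still satisfies smooth base change and the projection formula along squares that involve derived fiber products. Here I would reduce, via the adjunction $\CCC_\red \leftrightarrows \CCC'$ and nilinvariance, to squares in $\CCC$ where the axioms hold by hypothesis; the standard argument is that a derived pullback square becomes classical after reduction, so the Beck--Chevalley maps in $\DDD'$ are identified with those in $\DDD$. Once this is in place, the remaining axioms are essentially definitional, and the lemma follows by collecting the identifications.
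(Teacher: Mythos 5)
Your proposal is essentially correct and follows the same ``match the axioms'' strategy as the paper, but there is a noticeable difference in emphasis. The paper's own proof is a single sentence: ``Additivity follows from localization, and the other axioms are immediate.'' The one point the paper singles out as deserving an argument is precisely \emph{additivity}, i.e.\ that $\DDD'(\coprod_i X_i) \to \prod_i \DDD'(X_i)$ is an equivalence: this is part of the $(*,\sharp,\otimes)$-formalism axioms in \cite[Def.~2.2]{KhanVoevodsky} but is not an explicit clause of Definition~\ref{Def:coefficient_system}. It is derived from \textbf{Localization} applied to the clopen decomposition of a coproduct. You never name this axiom; it is arguably subsumed in your remark that ``\textbf{Localization} gives the required recollement along any closed/open complementary pair,'' but you file localization under the Voevodsky conditions (where it also belongs) rather than flagging additivity as the one clause that needs a deduction. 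Conversely, you treat the passage from $\CCC$ to $\CCC'$ (derived fiber products, identification of Beck--Chevalley transformations after reduction) as ``the main obstacle,'' whereas the lemma takes the extension $\DDD'$ as already given and the paper regards that part as routine; this discussion really belongs in the surrounding text rather than in the proof of the lemma. Also, your parenthetical ``both in fact only need to be tested for the trivial line bundle on a point-like base'' is a side remark you wisely don't lean on, since Definition~\ref{Def:coefficient_system} already demands the conditions for arbitrary vector bundles. Net: the content checks out, but you could tighten the write-up to mirror the paper by simply observing that additivity follows from localization and everything else is a dictionary match.
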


\begin{proof}
    Additivity follows from localization, and the other axioms are immediate.
\end{proof}

\begin{Rem}
    Suppose that $S$ is Noetherian and finite dimensional, and take $\CCC$ to be the category of schemes which are separated and finite type over $S$. Then a coefficient system on $\CCC$ in the above sense is the same thing as a presentable coefficient system over $S$ in the sense of \cite[Def.~2.4]{GallauerSix}. See also \cite[Rem.~2.21]{GallauerSix} for a comparison with other approaches in the literature. For instance, passing to homotopy categories yields a \emph{motivic triangulated category} over $\CCC$ in the sense of Cisinski--Deglise \cite[Def.~2.4.45]{CisinskiDeglise}.
\end{Rem}

\begin{Not}
    Let $\DDD$ be a coefficient system on $\CCC$, and let $X \in \CCC$ and $E$ a vector bundle on $X$. Write $p \colon E \to X$ for the projection and $s \colon X \to E$ for the zero section. Then the \emph{Thom twist} is the functor
    \[ \Sigma^E \coloneqq p_\sharp s_* \colon \DDD(X) \to \DDD(X). \]
    We write $\langle n \rangle \coloneqq \Sigma^{\A^n_X}$ for $n \in \N$, and let $\langle -n \rangle$ be its inverse. The \emph{Tate twist} is then the functor
    \[ (n) \coloneqq \langle n \rangle [-2n]. \]
\end{Not}

\bibliographystyle{dary} 
\bibliography{refs}	

\end{document}